\begin{document}

\newcommand{\Sh}{\operatorname{Sh}}
\newcommand{\LM}{\operatorname{LM}}
\renewcommand{\Vec}{\mathbf{Vec}}
\newcommand{\FI}{\mathbf{FI}}
\newcommand{\FIop}{\mathbf{FI^{op}}}
\newcommand{\bC}{\mathbf{PM}}
\newcommand{\II}{\mathcal{I}}
\newcommand{\V}{\operatorname{V}}
\newcommand{\gl}{\mathfrak{gl}}

%C new macros
\newcommand{\Sch}{\mathbf{Sch}}
%C vanishing set of a function
\newcommand{\VVec}{\operatorname{V}_{\mathbf{Vec}}}

%Comments:
\newcommand{\ADcom}[1]{\textcolor{Green}{[#1]}}

\title{S\MakeLowercase{ym}-Noetherianity for powers of GL-varieties}

% \title{Infinite products of polynomial functors are S\MakeLowercase{ym}$^\textit{k}\times$GL-Noetherian}
%\title{S\MakeLowercase{ym}$\times$GL-Noetherianity}

\author[Chiu]{Christopher H.~Chiu}
\address{Eindhoven University of Technology, Department of Mathematics
and Computer Science, P.O.~Box 513, 5600 MB, Eindhoven, The
Netherlands}
\email{c.h.chiu@tue.nl}

\author[Danelon]{Alessandro Danelon}
\address{Eindhoven University of Technology, Department of Mathematics
and Computer Science, P.O.~Box 513, 5600 MB, Eindhoven, The
Netherlands}
\email{a.danelon@tue.nl}

\author[Draisma]{Jan Draisma}
\address{University of Bern, Mathematical Institute, Sidlerstrasse 5,
3012 Bern, Switzerland; and
Eindhoven University of Technology, Department of Mathematics
and Computer Science, P.O.~Box 513, 5600 MB, Eindhoven, The
Netherlands}
\email{jan.draisma@unibe.ch}

\author[Eggermont]{Rob H.~Eggermont}
\address{Eindhoven University of Technology, Department of Mathematics
and Computer Science, P.O.~Box 513, 5600 MB, Eindhoven, The
Netherlands}
\email{r.h.eggermont@tue.nl}

\author[Farooq]{Azhar Farooq}
\address{Eindhoven University of Technology, Department of Mathematics
and Computer Science, P.O.~Box 513, 5600 MB, Eindhoven, The
Netherlands}
\email{a.farooq@tue.nl}

\let\thefootnote\relax
\footnotetext{\hspace*{-14pt}
\begin{minipage}{.039\textwidth}
\includegraphics[width=\textwidth]{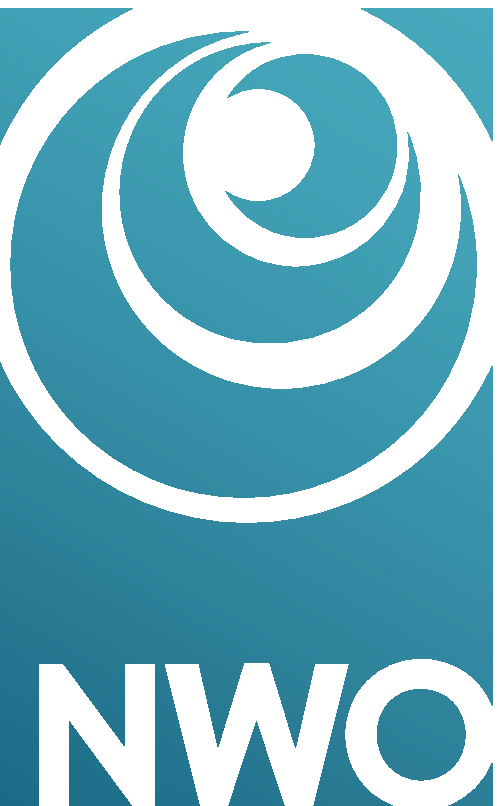}
\end{minipage}
\begin{minipage}{416pt}
CC, AD, and AF were funded by Vici grant 639.033.514 from
the Netherlands Organisation for Scientific Research (NWO).
JD was funded in part by that Vici grant and by project
grant 200021\_191981 from the Swiss National Science
Foundation. RE was supported by NWO Veni grant
016.Veni.192.113. JD thanks the Institute for Advanced
Study, where some of the work on this paper took place. 
\end{minipage}}

\maketitle

\begin{abstract}
Much recent literature concerns finiteness properties of
infinite-dimensional algebraic varieties equipped with an action of the
infinite symmetric group, or of the infinite general linear group. In
this paper, we study a common generalisation in which the product of
both groups acts on infinite-dimensional spaces, and we show
that these spaces are topologically Noetherian with respect
to this action.
\end{abstract}

\setcounter{tocdepth}{1}

% \tableofcontents

\section{Introduction}

\subsection{Sym-Noetherianity and GL-Noetherianity}
\label{ssec:SymOrGL}
It has been well-established since the 1980s that if $Z$ is finite-dimensional
variety, then the topological space $Z^\NN$, equipped with the
inverse-limit topology of the Zariski topologies, has the property
that if \[ X_1 \supseteq X_2 \supseteq X_3 \supseteq \ldots \] is a
descending chain of closed subvarieties, each stable under the infinite
symmetric group $\Sym=\bigcup_n \Sym([n])$ permuting the copies of
$Z$, then $X_n=X_{n+1}$ for all $n \gg 0$. We say that $Z^\NN$ is
{\em $\Sym$-Noetherian}; see
\cite{cohen:metabelian, cohen:closure,
aschenbrenner-hillar:finitesymmetric,
hillar-sullivant:GBinfdim} for the relevant literature.

On the other hand, the third author proved that if $Z$ is a {\em
$\GL$-variety}: a (typically infinite-dimensional) affine variety
equipped with a suitable action of the infinite general linear group
$\GL=\bigcup_n \GL_n$---see below for precise definitions---then
$Z$ is topologically $\GL$-Noetherian. See \cite{draisma} for
Noetherianity, and see \cite{bik-draisma-eggermont-snowden, bik-draisma-eggermont-snowden:uniformity} for the structure theory
of $\GL$-varieties.

\subsection{Our result: Sym$\times$GL-Noetherianity}
\label{ssec:SymAndGL}
Given a $\GL$-variety $Z$, the group $\Sym \times \GL$ acts naturally
on $Z^\NN$, and our main goal in this paper is to prove the following
theorem.

\begin{thm}[Main Theorem]\label{thm:main}
Let $Z$ be a $\GL$-variety over a field of characteristic zero. Then
$Z^\NN$ is topologically $\Sym \times \GL$-Noetherian. In other words,
every descending chain
\[ X_1 \supseteq X_2 \supseteq \ldots \]
of $\Sym \times \GL$-stable closed subvarieties of $Z^\NN$ stabilises
eventually. Equivalently, any $\Sym \times \GL$-stable closed subvariety
of $Z^\NN$ is defined by finitely many $\Sym \times \GL$-orbits of
polynomial equations.
\end{thm}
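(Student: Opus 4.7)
My strategy is an induction on the structural complexity of $Z$ as a $\GL$-variety, using the embedding theorem of \cite{bik-draisma-eggermont-snowden} to reduce to a standard ambient, and then adapting the shift-and-descent scheme of \cite{draisma} to work compatibly with the $\Sym$-action.

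First, by the embedding theorem every $\GL$-variety $Z$ admits a closed $\GL$-equivariant embedding into a product $A \times V$, where $A$ is a finite-dimensional affine variety and $V = \bigoplus_i S_{\lambda_i}(K^\infty)^{\oplus m_i}$ is a polynomial $\GL$-representation (with $K$ the base field). Since topological Noetherianity passes to closed subvarieties, it suffices to treat $Z = A \times V$. In the base case $V=0$, the $\GL$-action on $Z=A$ is trivial (a finite-dimensional polynomial $\GL$-representation has no non-trivial constituents), and the classical theorem of Aschenbrenner--Hillar \cite{aschenbrenner-hillar:finitesymmetric} gives $\Sym$-Noetherianity of $A^\NN$, hence $\Sym \times \GL$-Noetherianity.

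For the inductive step, well-order pairs $(V,A)$ by the lex-maximal partition appearing in $V$, refined by its multiplicity and by $\dim A$. Given a non-stabilizing chain $X_1 \supsetneq X_2 \supsetneq \ldots$ of $\Sym \times \GL$-stable closed subvarieties of $(A \times V)^\NN$, pick witnesses $f_n \in I(X_n)\setminus I(X_{n-1})$. Each $f_n$ depends on only finitely many $\NN$-coordinates and only finitely many $\GL$-directions. Expanding $f_n$ along the top-weight isotypic component $S_\lambda(K^\infty)^{\oplus m}$ and applying a joint $\Sym \times \GL$-shift (erasing a bounded number of indices in $\NN$ and basis vectors in $K^\infty$), the leading coefficients of the $f_n$ can be brought to a uniform form. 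Localizing where this leading coefficient does not vanish reduces the chain to one in an ambient with strictly smaller top partition; on the vanishing locus, the multiplicity of the top partition drops. In either case the inductive hypothesis applies and contradicts non-stabilization.

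The principal obstacle is constructing the joint shift so that it genuinely lowers the chosen complexity measure. In the pure-$\GL$ setting of \cite{draisma} a single shift $K^\infty \to K^\infty$ suffices to simplify each polynomial; in the pure-$\Sym$ setting the analogous erasure on $\NN$ plays the same role. In our combined setting the leading part of each $f_n$ couples both directions, so one must extract a finite ``support pattern'' that can be simultaneously shifted away. I expect this to require a Dickson-style pigeonhole argument on bi-indexed supports, together with a careful use of the strength stratification of \cite{bik-draisma-eggermont-snowden} to control how the leading coefficient transforms under the joint shift. Making this descent effective---so that at each step we land inside an ambient that falls strictly under the inductive hypothesis---will, I expect, be the crux of the proof.
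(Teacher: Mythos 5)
There is a genuine gap, and it is exactly at the point you flag as ``the crux'': your induction is set up on a class of objects that is not closed under the descent step. You induct on pairs $(V,A)$ and treat the chain inside $(A\times V)^\NN$ with a single copy of $\Sym$, hoping that localizing at the leading coefficient (resp.\ passing to its vanishing locus) lands you again in something of the form $(A'\times V')^\NN$ with smaller invariants. But the equation $f_n$ you extract involves only finitely many $\NN$-indices, while a point of the subvariety has infinitely many components; after the shift, each individual component, independently of the others, either lies over the locus where the leading coefficient vanishes or over its complement. So a $\Sym\times\GL$-stable subvariety is not contained in one of your two ``smaller'' ambients: it is covered only after partitioning $\NN$ into finitely many parts, with the components labelled by different parts landing in different smaller pieces. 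The simplest instance is Example~\ref{ex:SymSkew} in the paper: sequences of matrices each of which is symmetric \emph{or} skew-symmetric are not covered by $Z_1^\NN$ or $Z_2^\NN$ separately, but by $Z_1^\NN\times Z_2^\NN$ with $\Sym^2\times\GL$ acting. Consequently the statement you are inducting on must be strengthened to $\Sym^k\times\GL$-Noetherianity of products $Z_1^\NN\times\cdots\times Z_k^\NN$ with $k$ allowed to grow (the paper's Theorem~\ref{thm:Main2}), and the covering maps must be allowed to repartition the index sets (the partition morphisms of Section~\ref{ssec:PM} and the Parameterisation Theorem~\ref{thm:parameterisation}). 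A ``Dickson-style pigeonhole on bi-indexed supports'' only normalizes leading terms; it does not repair the fact that your inductive class is too small, which is why the paper believes the Main Theorem cannot be proved without multiple copies of $\Sym$.

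Two further points where the sketch underestimates the work. First, the ``vanishing locus'' branch is not handled by a drop in multiplicity of the top partition: in the paper, the points lying over the bad locus $Y_0$ are dealt with by a second, separate induction on the quantity $\delta_X$ (the minimal total size of index sets where the subvariety is proper), interleaved with the order on ambients; your single well-order on $(V,A)$ has no slot for this. Second, the reduction of the leading part is not a one-shot extraction of a leading coefficient: because the equation couples several irreducible $\GL$-constituents, one needs the iterated derivative/highest-weight argument (the paper's Lemma~\ref{lm:Dra19} and Proposition~\ref{prop:Smaller}), which produces a whole family of smaller factors $B_l\times Q_l$, one for each component behaviour---again forcing the passage to products of several distinct factors rather than a single power. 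Your overall instincts (embedding theorem, shift, localize-or-vanish dichotomy, well-founded complexity) match the paper's toolkit, but without the strengthened $\Sym^k$ statement and partition morphisms the induction does not close.
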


Below we give two examples of $\Sym \times \GL$-varieties; these illustrate that even when $Z$ is a rather simple $\GL$-variety, $Z^\NN$ will have many $\Sym \times \GL$-stable closed subvarieties.

\begin{ex} \label{ex:Matroid}
    Consider the space of $\NN \times \NN$-matrices where $\Sym$ permutes the rows, and $\GL$ acts simultaneously on all rows.
    We can think about this space as $Z^\NN$, where $Z$ is the space $\AA^\NN$ with the obvious $\GL$-action. We write $x_{i,j}\ (i,j \in \NN)$ for the coordinates on this space.
    Let $X$ be a $\Sym\times\GL$-stable proper closed subvariety of this space.
    Let $f$ be a nonzero polynomial vanishing identically on $X$ involving only the $x_{ij}$ with $1 \leq i,j \leq n$, chosen such that $n$ is minimal among all defining equations of $X$.
    We claim that $X$ is contained in the variety of matrices with rank at most $n-1$.
    Indeed, suppose that a matrix $A$ in $X$ has rank at least $n$. Then by basic linear algebra the $\Sym \times \GL$-orbit of $A$ projects dominantly in the affine space $\AA^{n \times n}$ corresponding to the upper left $n \times n$-block. 
    This implies that $f$ is the zero polynomial; a contradiction. 
    
    Also, by the minimality of $n$, there must exist a matrix  in $X$ whose rank is $n-1$. However, it is not easy to completely classify the $\Sym \times \GL$-stable closed subvarieties of $Z^\NN$ containing a matrix of rank $n-1$ and no matrices of rank $n$. For instance, 
    fix any matroid $M$ of rank $n-1$ on the ground set $[m]\coloneqq \{1,\ldots,m\}$ and let $R \subseteq \AA^{m \times (n-1)}$ be the variety defined by the determinants of the $(n-1) \times (n-1)$-submatrices whose rows correspond to non-bases of $M$. Regard $R$ as a subvariety of $\AA^{\NN \times \NN}$ by extending with zeros and set $X_M \coloneqq \ol{(\Sym \times \GL)R}$. This $\Sym\times \GL$-stable subvariety of $\AA^{\NN \times \NN}$ is the common zero set of two classes of polynomials: all monomials containing variables from at least $m+1$ distinct rows, and the $\Sym$-orbits of all products of the form
    \[ \prod_{\pi \in \Sym([m])} \det(x[\pi(I_\pi),\;J_\pi])\]
    where each $I_\pi \subseteq [m]$ is an $(n-1)$-element set that is not a basis of $M$, each $J_\pi$ is an arbitrary $(n-1)$-element subset of $\NN$, and $x[\pi(I_\pi),J_\pi]$ stands for the matrix of variables $x_{ij} $ with $i \in \pi(I_\pi)$ and $j \in J_\pi$. 
    
Now suppose that $M,M'$ are loopless matroids on ground sets $[m],[m']$, both realisable over the algebraic closure of the ground field. We then claim that $X_M=X_M$ holds (if and) only if $M,M'$ are isomorphic. Indeed, if $X_M=X_{M'}$, then let $p \in \AA^{m \times (n-1)} \subseteq \AA^{\NN \times \NN}$ realise $M$, so that $p \in X_{M}=X_{M'}$. This means that $p$ satisfies all equations for $X_{M'}$. Since $M$ is loopless, all rows of $p$ are nonzero, and the monomial equations for $X_{M'}$ imply that $m' \geq m$. The converse follows by taking a realisation of $M'$. That the determinantal equations for $X_{M'}$ vanish on $p$ imply that, after a permutation, all non-bases of $M'$ are also non-bases of $M$. Again, the converse holds by taking a realisation of $M'$. Hence $M$ and $M'$ are isomorphic. 
    
    We conclude that the considerable combinatorial complexity of the class of realisable matroids is contained in the classification problem for $\Sym \times \GL$-subvarieties of $Z^\NN$.    
\end{ex}

% \begin{re}
%     The classification of $\Sym$-stable closed subvarieties of $(\AA^1)^\NN$ turns out to be rather complex, see \cite{nagpal-snowden:affinesymmetric}.
%     As $\Sym$-stable closed subvarieties of $(\AA^1)^\NN$ can be viewed as $\Sym \times \GL$-stable closed subvarieties of $(\AA^1)^\NN$ with trivial $\GL$-action, it is not surprising that the classification problem of Example~\ref{ex:Matroid} is difficult too.
% \end{re}

\begin{re}
    Already the classification of $\Sym$-stable closed subvarieties of $(A^1)^\NN$ is nontrivial \cite{nagpal-snowden:affinesymmetric}, so it is not so surprising that also the classification of $\Sym \times \GL$-stable closed subvarieties of $Z^\NN$ in Example~\ref{ex:Matroid} is difficult.
\end{re}

\begin{ex}
    Let $Z$ be the space of symmetric $\NN \times \NN$ matrices, acted upon by $\GL$ via $(g,A) \mapsto gAg^T$. It is not hard to classify the $\GL$-stable closed subvarieties of $Z$: they are the empty set and the varieties of matrices whose rank is bounded by some $k \in \{0,\ldots,\infty\}$. 

    Now let $X$ be a $\Sym\times\GL$-stable proper closed  subvariety of $Z^\NN$, and let $n$ be minimal such that there exists a nonzero polynomial that vanishes on $X$ and involves only coordinates on the first $n$ copies of $Z$. Then it follows from \cite[Proposition 3.3]{eggermont:finitenessproperties} that $X$ is contained in the variety $X_{n,r}$ of $\NN$-tuples in which every $n$-tuple has a nontrivial linear combination whose rank is at most some integer $r$. However, completely classifying all $\Sym\times \GL$-stable closed subvarieties of $X_{n,r}$ seems completely out of reach.
\end{ex}

\subsection{A generalisation: Sym$^\textit{\textbf{k}}\times$GL-Noetherianity}
We prove the Main Theorem by
establishing first the following more general result.

\begin{thm} \label{thm:Main2}
Let $Z_1,\ldots,Z_k$ be $\GL$-varieties over a field of characteristic
zero. Then the variety $Z_1^\NN \times \cdots \times Z_k^\NN$ is $\Sym^k \times \GL$-Noetherian.
\end{thm}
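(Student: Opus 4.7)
The plan is to run an induction whose natural framework is precisely the strengthened statement: the case $k=1$ is the Main Theorem, but the shift-and-substitute arguments that drive such Noetherianity proofs inevitably produce new factors carrying their own copies of $\Sym$, so the statement for arbitrary $k$ is the smallest one that is closed under the inductive step.

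The first move is to use the structure theory of $\GL$-varieties from \cite{bik-draisma-eggermont-snowden} to embed each $Z_i$ $\GL$-equivariantly as a closed subvariety of some $\V(P_i)$, where $P_i$ is a polynomial functor. Since topological Noetherianity passes to closed subspaces, it suffices to treat the case $Z_i = \V(P_i)$. On the resulting space $\V(P_1)^\NN \times \cdots \times \V(P_k)^\NN$ I would then induct on a well-founded complexity measure attached to the tuple $(P_1,\ldots,P_k)$---for instance the multiset of pairs (top degree, dimension of top-degree piece) of the $P_i$'s, under the multiset extension of lexicographic order. The base case is the one in which every $P_i$ is constant: then each $\V(P_i)$ is finite-dimensional with linear $\GL$-action, so after forgetting the $\GL$-action the problem reduces to $\Sym^k$-Noetherianity of a product of finite-dimensional varieties, which is covered by classical results such as those of \cite{cohen:metabelian, aschenbrenner-hillar:finitesymmetric, hillar-sullivant:GBinfdim}.

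For the inductive step, take a descending chain $X_1 \supseteq X_2 \supseteq \ldots$, and, supposing it does not stabilise, pick a nonzero polynomial $f$ vanishing on some $X_n$ that is minimal with respect to our complexity order among equations used. Stratify $\V(P_1)^\NN \times \cdots \times \V(P_k)^\NN$ into the open locus $U$ where the leading coefficient of $f$ (with respect to some chosen top-degree coordinate) is nonzero, together with its closed complement. On $U$, the chosen top-degree coordinate is determined generically by lower-degree ones, so the closure of $X_n \cap U$ is controlled by a tuple whose associated polynomial functors are of strictly smaller complexity; on the complement, a further condition strictly shrinks one of the $P_i$'s. In both cases, the inductive hypothesis applies. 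To perform this stratification $\Sym^k$-equivariantly one must select the ``top-degree coordinate'' simultaneously across each $\Sym$-orbit of $\NN$-indices, which is precisely what forces the introduction of additional $\Sym$-factors in the reduced problem and why the general $k$-statement is essential.

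The main obstacle will be this equivariant bookkeeping: a naive shift or substitution would single out a specific $\NN$-index and destroy $\Sym$-symmetry. Replacing it by a $\Sym$-equivariant stratification produces strata indexed by finite subsets of each $\NN$-copy of each $Z_i$, with new $\Sym$-factors acting on the remaining unselected indices, and the resulting pieces must be matched to instances of Theorem~\ref{thm:Main2} with larger $k$. Verifying that our chosen complexity measure strictly decreases under this operation, and that the resulting inductive call is well-posed (in particular, that $k$ does not grow unboundedly within a single descent), is where I expect the bulk of the technical work to lie.
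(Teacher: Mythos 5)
Your overall strategy---embed each $Z_i$ into a polynomial functor, induct on a well-founded complexity of the ambient data, and stratify using a minimal equation, accepting that the number of $\Sym$-factors grows---is indeed the skeleton of the paper's argument. But as written the proposal has a genuine gap in the descent itself. The key failure is the claim that on the locus where the leading coefficient of $f$ vanishes ``a further condition strictly shrinks one of the $P_i$'s'': the leading coefficient is a polynomial in lower-degree (or lower) coordinates, and imposing its vanishing only cuts out a proper closed subvariety inside the \emph{same} ambient functor, so your complexity measure (multiset of top degrees and dimensions of top pieces of the $P_i$) does not move. The same problem occurs on the open stratum: after substituting away the top coordinate one does get some factors with strictly smaller functor, but the recursion also produces factors in which the polynomial-functor part is unchanged and only the finite-dimensional ``constant'' part acquires a new \emph{monic} equation (the piece $B_0\times Q_0$ with $Q_0=Q$ in Proposition~\ref{prop:Smaller}). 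A measure depending only on the tuple $(P_1,\ldots,P_k)$ cannot terminate this branch. The paper's fix is precisely the finer order of Section~\ref{sssec:orderVecembedded}: one tracks, for each factor, a constant part $B$ with a \emph{specified} embedding $B\subseteq Y\times\AA^{n}$ over a $\Vec$-variety base $Y$, and measures progress by the leading-monomial set $\LM(B)$ (well-founded by Dickson's lemma, Lemma~\ref{lm:Order}), together with topological Noetherianity of the base $Y$ (Theorem~\ref{thm:Noetherian}) and an inner induction on the quantity $\delta_X$ of Section~\ref{sssec:orderclosedproducttype} to absorb the closed stratum. None of these ingredients is visible in your class ``closed subvarieties of $\V(P_1)^\NN\times\cdots\times\V(P_k)^\NN$'', which is not closed under the shift/substitution step: after shifting, the factors live over a nontrivial, typically infinite-dimensional, $\Vec$-variety base, so your base case is also not simply classical $\Sym^k$-Noetherianity of a finite-dimensional product.

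Two further points you flag but do not resolve are genuinely nontrivial in the paper. First, making the stratification $\Sym^k$-equivariant is done not by equivariant strata but by passing to the category $\bC$ of partition morphisms and the Parameterisation Theorem~\ref{thm:parameterisation}: one shifts over a finite tuple of index sets and covers the subvariety by images of finitely many partition morphisms from strictly smaller product-type varieties; Noetherianity then follows by pulling back chains (Lemma~\ref{lm:Preimage}, Proposition~\ref{prop:coverNoetherian}). Second, well-foundedness of the induction when $k$ grows is not automatic: the paper proves it (Lemmas~\ref{lm:Smallerlk} and~\ref{lm:wellfoundedproduct}) by a K\"onig's-lemma argument showing that unbounded growth of the number of factors would force an infinite strict descent in the $B\times Q$ order. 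Also note that the needed substitution step is not a single leading-coefficient trick: because an equation may involve several irreducible summands of $Q$, one needs the iterated decomposition $h_l=x_l\cdot h_{l-1}+r_l$ of Lemma~\ref{lm:Dra19}, which produces several new factors at once. So while your plan points in the right direction, the proposed induction does not close as stated; the missing ideas are the relative constant parts with the $\LM$/Dickson order, the inner induction on $\delta_X$, and a proof of well-foundedness of the order in which $k$ is allowed to vary.
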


Here there is one copy of $\GL$ that acts diagonally,
and there are $k$ copies of $\Sym$ that act on separate copies of
$\NN$.
We believe it is impossible to prove the Main Theorem without considering multiple copies of $\Sym$.
Indeed, covering a proper closed $\Sym \times \GL$-stable subvariety of $Z^\NN$ requires partitioning $\NN$ into finitely many parts such that the points in $Z$ labelled by the indices in one same part behave in a similar fashion.
The following example illustrates this point.

\begin{ex}\label{ex:SymSkew}
Let $Z$ be the space of $\NN \times \NN$-matrices over a field of
characteristic zero, equipped with the
$\GL$-action given by $(g,A) \mapsto gAg^T$. Let $X$ be the closed $\Sym \times \GL$-stable subvariety of $Z^\NN$ consisting of all infinite
matrix tuples $(A_1,A_2,\ldots)$ such that each $A_i$ is either symmetric
or skew-symmetric.  It is easy to see that $X$ is defined by the $\Sym \times \GL$-orbit of the equation $(x_{112}+x_{121})(x_{112}-x_{121})$,
where $x_{ijk}$ is the $(j,k)$-entry of the $i$th matrix. We will
see that the $\Sym \times \GL$-Noetherianity of $X$ follows from the
$\Sym^2 \times \GL$-Noetherianity of the \textit{``smaller''} variety $Z_1^{\NN} \times Z_2^{\NN}$,
where $Z_1 \subseteq Z$ is the $\GL$-subvariety of symmetric matrices,
and $Z_2 \subseteq Z$ is the $\GL$-subvariety of skew-symmetric matrices.
Here the term ``smaller'' refers to the fact that both $Z_1$ and $Z_2$ are quotients of $Z$.
The exact meaning of smaller varieties is given in Section~\ref{sssec:orderVecembedded}.
\end{ex}

\subsection{Relation to existing literature}

The Main Theorem generalises the results mentioned in Section~\ref{ssec:SymOrGL}:
taking for $Z$ a finite-dimensional affine variety with trivial $\GL$-action,
one recovers the $\Sym$-Noetherianity of $Z^\NN$; and on the other
hand, if $Z$ is a $\GL$-variety, then considering
chains $X_1 \supseteq X_2 \supseteq \ldots$ in which each $X_i$ is of the form
$Z_i^\NN$ with $Z_i \subseteq Z$ a $\GL$-subvariety, one recovers the
$\GL$-Noetherianity of $Z$.

The proof of the Main Theorem will reflect these two special cases.
We will use the proof method from \cite{draisma} for the $\GL$-Noetherianity of $Z$,
and similarly, we will use methods for $\Sym$-varieties from
\cite{draisma-eggermont-farooq:components}. In fact, we do not
explicitly use Higman's lemma in our proofs as is classically done
\cite{aschenbrenner-hillar:finitesymmetric,hillar-sullivant:GBinfdim,draisma:combinatorialAG}, and {\em en passant} we give a new
proof of the $\Sym$-Noetherianity of $Z^\NN$ for a finite-dimensional
variety $Z$. However, our proof only yields a {\em set-theoretic}
Noetherianity result, while in the pure $\Sym$-setting (much) stronger
results are known: increasing chains of $\Sym$-stable ideals in the coordinate ring of $Z^\NN$ with $Z$ a finite-dimensional variety stabilise
\cite{cohen:metabelian,cohen:closure,aschenbrenner-hillar:finitesymmetric,hillar-sullivant:GBinfdim}, and even
finitely generated modules over such rings with a compatible $\Sym$-action
are Noetherian \cite{Nagel17b}.  In the pure $\GL$-setting, however, such
stronger Noetherianity results are known only for very few classes of $\GL$-varieties: over a field of characteristic zero  ring-theoretic Noetherianity holds for a direct
sum of copies of the first symmetric power $S^1$ \cite{sam-snowden:GLmodulesoverpolyringsininfvars, sam-snowden:GLmodulesoverpolyringsininfvarsII}, for the second symmetric power $S^2$, for $\bigwedge^2$ \cite{nagpal-sam-snowden}, for $S^1 \oplus S^2$
and for $S^1 \oplus \bigwedge^2$ \cite{Sam22}.

Partitions of $\NN$ into finitely many subsets also feature in the
classification of symmetric subvarieties of infinite affine space
$(\AA^1)^\NN$ \cite{nagpal-snowden:affinesymmetric}, and while our proofs do not logically
depend on this classification, that paper did serve as an inspiration.

\subsection{Organisation of this paper}

This paper is organised as follows.
In Sections~\ref{ssec:Vecvar} and~\ref{ssec:FIxVecvars} we
% reformulate the Main Theorem and Theorem~\ref{thm:Main2} in
introduce polynomial functors and affine varieties over the categories $\Vec$, $\FIop \times \Vec$ and $(\FIop)^k \times \Vec$.
This language happens to be more convenient than a purely infinite-dimensional approach, as shown in  Remark~\ref{re:Richer}.
In Section~\ref{ssec:PM} we introduce the category
$\bC$ with morphisms between $(\FIop)^k \times \Vec$-varieties, in which, for
the reasons explained in Example~\ref{ex:SymSkew} and above it, $k$ varies.
In Section~\ref{ssec:producttype} we describe $(\FIop)^k \times \Vec$-varieties of {\em product type}. The simplest ones among these are of the form
\[ Z:
(V;S_1,\ldots,S_k) \mapsto \prod_{i=1}^k Z_i(V)^{S_i},\]
which are the ones of interest in our Theorem~\ref{thm:main} and Theorem~\ref{thm:Main2}.
Reformulations of our Main Theorem and its generalisation Theorem~\ref{thm:Main2} in this language are in Remark~\ref{re:reformulation}.

Our proofs rely on induction on the ``complexity'' of product-type $(\FIop)^k \times \Vec$-varieties.
The several well-founded orders used in this induction are the topic of Section~\ref{ss:complexity}, which builds on $\FI$-techniques developed in Sections~\ref{ssec:shifting} and~\ref{ssec:leadingmonideal}.
We introduce orders on:
\begin{enumerate}
    \item polynomial functors (Section~\ref{sssec:orderpolfun}),
    \item $\Vec$-varieties with a specified closed embedding in $B \times Q$ where $B$ is a finite-dimensional algebraic variety and $Q$ is a suitable polynomial functor (Section~\ref{sssec:orderVecembedded}),
    \item $(\FIop)^k \times \Vec$-varieties of product type in the category $\bC$ (Section~\ref{sssec:ProductTypeOrder}),
    \item \label{closedsubvar} closed subvarieties of $(\FIop)^k \times \Vec$-varieties of product type (Section~\ref{sssec:orderclosedproducttype}).
\end{enumerate}
Then in Section~\ref{sec:Core} we formulate and
prove the Parameterisation Theorem,  Theorem~\ref{thm:parameterisation}, the core technical result of this
paper.
The statement roughly says that if $X$ is a proper closed $(\FIop)^k \times \Vec$-subvariety of a variety $Z$ of  product type, then $X$ is covered by finitely many morphisms
in $\bC$ from $(\FIop)^l \times \Vec$-varieties of product form that are smaller than $Z$ in the sense of Section~\ref{sssec:ProductTypeOrder}.
We prove this theorem by induction on closed subvarieties mentioned in (\ref{closedsubvar}).
The description of these smaller $(\FIop)^l \times \Vec$-varieties of product type relies on Proposition~\ref{prop:Smaller}.
This proposition allows to partition points according to their common behaviour with respect to a specific defining equation, similarly to what happens in \cite{draisma}.
Indeed, our Lemma~\ref{lm:Dra19} is proven as an iteration of the argument for the Embedding Theorem in \cite{bik-draisma-eggermont-snowden}, which in turn uses a technique developed in \cite{draisma}.
Essential for applying Proposition~\ref{prop:Smaller} is the operation of shifting over a tuple of finite sets, described in Section~\ref{ssec:shifting}.
In the final Section~\ref{sec:Proof} we use all the above to prove that all $(\FIop)^k \times \Vec$-varieties of product type are Noetherian via an induction on their order of Section~\ref{sssec:ProductTypeOrder}.
Theorem~\ref{thm:Main2} and the Main Theorem follow as corollaries.

\subsection{Notation and conventions}
\begin{itemize}
    \item For a nonnegative integer $k$, we set $[k]\coloneqq \{1,
    \dots, k\}$; so in particular $[0]=\emptyset$.
    \item Let $S$ be a finite set. We denote by $|S|$ the cardinality of $S$.
    \item Throughout this paper, we work over a field $K$ of
    characteristic zero.
    \item $\Sym$ denotes the infinite symmetric group.
    It is defined as the direct limit over $\Sym(n)$, the symmetric group on the set $[n]$, with the obvious inclusion maps.
    \item $\GL$ denotes the infinite general linear group.
    It is defined as the direct limit of $\GL_n$, the general linear group on $K^n$, with inclusion maps $\GL_n \to \GL_{n+1}$ given by 
    \[ g \mapsto
    \left(\begin{array}{c|c}
        g & 0 \\
       \hline 0 & 1
    \end{array}\right).
    \]
    \item The category of schemes over $K$ is denoted by $\Sch_K$. A product $X\times Y$ of two schemes will always mean a product in this category.
    \item A \emph{variety} $X$ here is a reduced affine scheme of finite type over $K$. By $K[X]$ we denote its coordinate ring, so $X = \Spec K[X]$. If $Y$ is a subvariety of $X$, then we write $\II(Y) \subseteq K[X]$ for the (radical) ideal of functions on $X$ vanishing on $Y$.
    \item If $f\in K[X]$ then we write $X[1/f] \coloneqq \Spec(K[X]_f)$.
    \item Let $\phi: X \to Y$ be a morphism of varieties.
    We denote by $\phi^\#: K[Y] \to K[X]$ the induced morphism on coordinate rings.
    \item By a point $x$ of a variety $X$ we always mean a closed point of $X$, i.e.\ an element of $X(\overline{K})$.
    % Recall that, for example, $(X\times Y)(\overline{K})\cong X(\overline{K})\times Y(\overline{K})$ and that a morphism of varieties $X \to Y$ is surjective if and only if the map $X(\overline{K}) \to Y(\overline{K})$ is.
\end{itemize}

\section{The categories of $(\FIop)^k \times \Vec$-varieties}\label{sec:Setup}

\subsection{$\Vec$-varieties}\label{ssec:Vecvar}
Let $K$ be a field of characteristic zero, and let $\Vec$ be the category
of finite-dimensional vector spaces over $K$ with $K$-linear morphisms.
We will be working with $\Vec$-varieties, a functorial finite-dimensional counterpart of $\GL$-varieties.
Below, we quickly recap the theory of polynomial functors: definitions, relevant properties; and we define the notion of $\Vec$-variety.
See Remark~\ref{re:GLVar} for the connection with $\GL$-varieties.

\begin{de}
A {\em polynomial functor} is a functor $P:\Vec \to \Vec$ such that for each
$U,V \in \Vec$ the map $P:\Hom_\Vec(U,V) \to \Hom_\Vec(P(U),P(V))$ is
polynomial, and such that the degree of this polynomial map is upper-bounded
independently of $U,V$. The minimal such bound is called the \emph{degree}
of $P$.
\end{de}

We will also regard a polynomial functor $P$ as a functor $\Vec \to \Sch_K$ by composing with the embedding $\Vec \to \Sch_K$ given by $V \mapsto \Spec \left (\Sym_K (V^*)\right)$, the spectrum of the symmetric algebra on the dual space $V^*$ of $V$.
Every polynomial functor $P$ equals $P_0 \oplus \cdots
\oplus P_d$, where $d$ is the degree of $P$ and $P_i$ is defined as
\[ P_i(V)\coloneqq \{v \in P(V) \mid \forall t \in K: P(t\id_V)v=t^i v\}. \]
Considering $P$ as a functor $\Vec \to \Sch_K$ we have $P(V) = P_0(V) \times \ldots \times P_d(V)$. 
We note that $P_0$ is
a constant polynomial functor, which assigns a fixed vector space $P(0)
\in \Vec$ to all $V \in \Vec$ and the identity map to each linear map. We
call $P$ {\em pure} if $P_0=\{0\}$.

Let $X,Y: \Vec \to \Sch_K$ be functors. A \emph{closed immersion} $\iota: X \to Y$ is a natural transformation such that $\iota(V): X(V) \to Y(V)$ is a closed immersion for all $V\in\Vec$. In particular, $X$ is then a subfunctor of $Y$.

\begin{de}
An \emph{affine $\Vec$-scheme} is a functor $X: \Vec \to \Sch_K$ that admits a closed immersion $X \to P$ with $P: \Vec \to \Sch_K$ a polynomial functor. A \emph{$\Vec$-variety} is an affine $\Vec$-scheme $X$ such that $X(V)$ is reduced for all $V\in \Vec$. The \emph{category of affine $\Vec$-schemes} is the full subcategory of the functor category $\Sch_K^{\Vec}$ whose objects are affine $\Vec$-schemes.
\end{de}

Spelled out explicitly, a $\Vec$-variety $X$ can be
described by the data of a
polynomial functor $P$ and a subvariety $X(V) \subseteq P(V)$ for each $V\in \Vec$ such that, for each $\phi \in
\Hom_{\Vec}(U,V)$, the linear map $P(\phi)$ maps $X(U)$ into $X(V)$. A morphism of $\Vec$-varieties $\tau:X\to Y$ consists of
a morphism of varieties $\tau(V): X(V) \to Y(V)$ for each $V\in\Vec$ such that, for each $\phi \in \Hom_{\Vec}(U,V)$, we have $\tau(V) \circ X(\phi)=Y(\phi) \circ \tau(U)$.

\begin{re}\label{re:VecProducts}
The subcategory of $\Vec$-varieties is closed under taking closed immersions and finite products. To see the latter, note that the product of $X,Y: \Vec \to \Sch_K$ in $\Sch_K^{\Vec}$ is given by $V \mapsto X(V) \times Y(V)$; and furthermore, given closed immersions $X \hookrightarrow P$, $Y \hookrightarrow Q$ the assignment
\[X(V)\times Y(V) \to P(V)\times Q(V)\]
defines a closed immersion of the product $X\times Y$ into the polynomial functor $P \oplus Q$.
\end{re}

\begin{lm}
The category of affine $\Vec$-schemes admits fibre products.
\end{lm}

\begin{proof}
First note that for morphisms of affine $\Vec$-schemes $X \to Y$, $Z
\to Y$ the fibre product $X\times_Y Z$ of $X$ and $Z$ over $Y$ exists in the functor category $\Sch_K^{\Vec}$ and is given by
\[(X\times_Y Z)(V)\coloneqq X(V) \times_{Y(V)} Z(V).\]
Moreover, since $Y(V)$ is affine (or more generally since $Y(V)$ is separated, see \cite[\href{https://stacks.math.columbia.edu/tag/01KR}{Tag 01KR}]{stacks-project}) the natural morphism
$X(V) \times_{Y(V)} Z(V) \to X(V) \times Z(V)$
is a closed immersion. The statement then follows by Remark~\ref{re:VecProducts}.
\end{proof}

The main result of \cite{draisma} says that $\Vec$-varieties are topologically Noetherian.

\begin{thm}[{\cite[Theorem 1]{draisma}}]\label{thm:Noetherian}
Let $X$ be a $\Vec$-variety. Then every descending chain of $\Vec$-subvarieties
\[X = X_0 \supseteq X_1 \supseteq X_2 \supseteq \ldots \]
stabilizes, that is, there exists $N\geq0$ such that for each $n\geq N$ we have $X_n = X_{n+1}$.
\end{thm}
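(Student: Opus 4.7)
The plan is to reduce the statement to the case where $X = P$ is itself a polynomial functor: by definition any $\Vec$-variety $X$ admits a closed immersion $X \hookrightarrow P$ into a polynomial functor, so every descending chain of $\Vec$-subvarieties of $X$ is simultaneously a descending chain of $\Vec$-subvarieties of $P$. The constant part $P_0$ contributes a fixed finite-dimensional ambient factor that is Noetherian by Hilbert's basis theorem, so one really needs to handle pure polynomial functors. To do so I would set up a Noetherian induction on a well-order on pure polynomial functors defined by their \emph{shape}: for $P = P_1 \oplus \cdots \oplus P_d$ decomposed into homogeneous pieces, the shape records the sequence of isotypic decompositions of $P_d, P_{d-1}, \ldots, P_1$ into Schur functors, compared lexicographically from the top. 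This is a well-order because at each fixed degree the possible isotypic decompositions form a well-ordered set.

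For the inductive step, suppose a descending chain $X_0 \supseteq X_1 \supseteq \ldots$ of $\Vec$-subvarieties of $P$ does not stabilise, and pick $N$ with $X_N \supsetneq X_{N+1}$ together with a nonzero $f \in K[P(V_0)]$ vanishing on $X_{N+1}(V_0)$ but not on $X_N(V_0)$. Let $Y$ be the closed $\Vec$-subvariety of $X_N$ cut out by the whole $\GL$-orbit of $f$; then $X_{N+1} \subseteq Y \subsetneq X_N$. On the open complement $X_N \setminus Y$ I need a parameterisation that reduces to strictly smaller data. The core technical claim, and the main obstacle, is that after enlarging $V_0$ to some $V = V_0 \oplus W$ and localising at $f$, the open subvariety $X_N(V)[1/f]$ admits a closed embedding into $B \times Q(W)$ for some finite-dimensional variety $B$ depending on $V_0$, where $Q$ is a polynomial functor of strictly smaller shape than $P$.

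The construction is a ``shift trick'' along the lines of \cite{draisma}. One uses the Schur--Weyl decomposition of $P(V_0 \oplus W)$ along the direct sum, identifies a Schur summand $\mathbb{S}_\lambda$ appearing in the top part $P_d$ against whose highest-weight vector in $\mathbb{S}_\lambda(W)$ the polynomial $f$ has a nonzero coefficient, and uses that nonvanishing to solve for one full copy of $\mathbb{S}_\lambda(W)$ in terms of the remaining coordinates and of the finite-dimensional data carried by the $V_0$-factor. This eliminates one copy of $\mathbb{S}_\lambda$ from $P_d$, producing the strictly smaller $Q$. Verifying that this solving can be done \emph{functorially in} $W$, and that the resulting map is a closed immersion, is where the real work lies.

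Granted the parameterisation, the induced chain on the open locus pulls back to a descending chain of $\Vec$-subvarieties of $B \times Q$, which stabilises by the inductive hypothesis (which I would strengthen so that the statement covers products of a finite-dimensional variety and a pure polynomial functor of given shape). The induced chain on $Y$ is a descending chain inside a \emph{proper} closed $\Vec$-subvariety of $X_N \subseteq P$, and an inner Noetherian induction on the poset of closed $\Vec$-subvarieties of $P$---fed by the same parameterisation argument applied iteratively to successive separating equations---handles this part. Together the two stabilisation statements force the original chain to stabilise, contradicting the assumption and completing the induction on shape.
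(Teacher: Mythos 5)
You have correctly identified the strategy of the result you are asked to prove: note that the paper itself does not reprove Theorem~\ref{thm:Noetherian} but quotes it from \cite{draisma}, and the architecture you describe (reduce to $B\times Q$ with $B$ finite-dimensional and $Q$ pure, induct on a well-founded order on polynomial functors obtained by comparing isotypic data from the top degree down, and eliminate one copy of a top irreducible after a shift, using a highest-weight coordinate of a separating equation) is indeed that of the cited proof, which the paper adapts in Lemma~\ref{lm:Dra19} and Proposition~\ref{prop:Smaller}. (Minor point: this is a well-founded partial order, not a well-order.) However, the two load-bearing steps of your sketch do not work as written.

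First, the parameterisation is aimed at the wrong locus. You localise $X_N$ at $f$, but $f$ does not vanish on $X_N$: the elimination of a copy of the top irreducible requires an equation that holds identically on the variety being embedded, with the coefficient $h$ of a highest-weight coordinate invertible there; so what embeds into smaller data is (a shift of) the zero locus of the orbit of $f$ --- that is $Y$, or $X_{N+1}$ --- localised at $h$, not $X_N[1/f]$. As literally stated your core claim is false: take $X_N=P=S^2$ and $f$ any nonzero element of $K[S^2(V_0)]$; every pure $Q\prec S^2$ in the order of Definition~\ref{de:orderpolfun} has $Q_2=0$ since $S^2$ is irreducible, so $B\times Q(W)$ has dimension linear in $\dim W$, while $S^2(V_0\oplus W)[1/f]$ contains a full copy of $S^2(W)$, ruling out a closed immersion functorial in $W$. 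The conceptual point is that an open dense piece of the ambient $B\times Q$ carries no equation and hence cannot be re-embedded into anything smaller; moreover this locus is irrelevant to the chain, which from index $N+1$ on lies entirely in $Y$. Second, the part that does matter, the chain inside $Y$, is handled circularly: a ``Noetherian induction on the poset of closed $\Vec$-subvarieties of $P$'' presupposes the descending chain condition for exactly that poset, which is the theorem being proved. In \cite{draisma}, and in the paper's relative version (Lemma~\ref{lm:Dra19} together with Proposition~\ref{prop:Smaller}), termination comes from an honest decreasing measure: one iterates the decomposition $h_l=x_l\cdot h_{l-1}+r_l$, peeling off coefficients of strictly smaller complexity until $h_0$ is a function on the finite-dimensional base alone; the residual bad locus is then cut out in the base (where the orbit of the leading coefficient of $h_0$ vanishes) and is controlled by classical Noetherianity of the base, respectively by the leading-monomial/Dickson bookkeeping of Section~\ref{sssec:orderVecembedded}, not by assuming DCC in $B\times Q$. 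Repairing your sketch means applying the shift trick to an equation of the proper subvariety, localising at its peeled coefficient rather than at $f$, and replacing the inner ``Noetherian induction'' by induction on that measure combined with Noetherianity of the finite-dimensional base.
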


\begin{re} \label{re:GLVar}
If $X$ is a $\Vec$-variety, then $X_\infty \coloneqq \lim_{\ot n} X(K^n)$ is a
$\GL$-variety in the sense of \cite{bik-draisma-eggermont-snowden}. This yields an equivalence
of categories between $\Vec$-varieties and $\GL$-varieties. Most of
our reasoning will be in the former terminology, but could be rephrased in
the latter.
\end{re}

\subsection{$(\FIop)^k \times \Vec$-varieties}\label{ssec:FIxVecvars}
Let $\FI$ be the category of finite sets with injections.

\begin{de}
Let $k \in \ZZ_{\geq0}$. An {\em $(\FIop)^k \times \Vec$-variety}
is a covariant functor $X$ from $(\FIop)^k$ to the category of
$\Vec$-varieties.
\end{de}

Explicitly, an $(\FIop)^k \times \Vec$-variety is given by the following data: for any $k$-tuple $(S_1,\ldots,S_k)$
we have a $\Vec$-variety $X(S_1,\ldots,S_k)$, and for any $k$-tuple
of injective maps $\iota=(\iota_1:S_1 \to T_1,\ldots,\iota_k:S_k \to
T_k)$, we have a corresponding morphism $X(\iota):X(T_1,\ldots,T_k)
\to X(S_1,\ldots,S_k)$ of $\Vec$-varieties and the usual
requirements that $X(\tau \circ \iota)=X(\iota) \circ X(\tau)$ and
$X(\id_{S_1},\ldots,\id_{S_k})=\id_{X(S_1,\ldots,S_k)}$.

Again, there are natural notions of morphism and closed immersion of $(\FIop)^k
\times \Vec$-varieties, and we call an
$(\FIop)^k \times \Vec$-variety Noetherian if every descending chain of closed
$(\FIop)^k \times \Vec$-subvarieties stabilises.

\begin{re} \label{re:Product}
In particular, any contravariant functor from $\FI$ to finite-dimensional
affine varieties, i.e., an $\FIop$-variety, is trivially an $\FIop \times \Vec$-variety. In this generality, $\FIop$-varieties are certainly not
Noetherian: see \cite[Example 3.8]{hillar-sullivant:GBinfdim}.

However, we will be largely concerned with $(\FIop)^k \times \Vec$-varieties
defined as follows. Let $Z_1,\ldots,Z_k$ be $\Vec$-varieties, define
\begin{equation}\label{eq:Product} X(S_1,\ldots,S_k)\coloneqq
Z_1^{S_1} \times \cdots \times Z_k^{S_k} \end{equation}
and for $\iota=(\iota_1,\ldots,\iota_k):(S_1,\ldots,S_k) \to (T_1,\ldots,T_k)$
define $X(\iota)$ as the product of the natural projections $Z^{T_i} \to
Z^{S_i}$ associated to $\iota_i$. We will prove that $(\FIop)^k \times \Vec$-varieties of this form are, indeed, Noetherian.
\end{re}

Note that we may also regard a $(\FIop)^k \times \Vec$-variety as a functor $(\FIop)^k \times \Vec \to \Sch_K$. For fixed $k$, the $(\FIop)^k \times \Vec$-varieties thus form a category by considering it as the full subcategory in the corresponding functor category.

\begin{re} \label{re:Richer}
If $X$ is an $(\FIop)^k \times \Vec$-variety, then the group $\Sym^k
\times \GL$ acts on the inverse limit
\begin{displaymath}
\varprojlim_{ n_1,\ldots,n_k,n} X([n_1],\ldots,[n_k])(K^n).
\end{displaymath}
This gives a functor from $(\FIop)^k \times \Vec$-varieties
to (infinite-dimensional) schemes equipped with a $\Sym^k \times \GL$-action.  Unlike in Remark~\ref{re:GLVar},
this is not quite an equivalence of categories (even under reasonable
restrictions on the $\Sym^k \times \GL$-action).  For example,
$X([n_1],\ldots,[n_k])$ could be empty for large $n_i$
and a fixed nontrivial $\GL$-variety for smaller $n_i$. We will consider an explicit example of this type later in Example~\ref{ex:fewer}. In that case,
the inverse limit is empty but the $(\FIop)^k \times \Vec$-variety is
not trivial. Our theorems will be formulated in the richer category of $(\FIop)^k
\times \Vec$-varieties.
\end{re}

\subsection{Partition morphisms and the category $\bC$}\label{ssec:PM}

Suppose that we are given a point $p$ in some $X(S_1,\ldots,S_k)(V)$, where $X$ is as in \eqref{eq:Product}. Then the components of $p$ labelled by one of the finite sets $S_i$ may exhibit different behaviours, which prompts us to further partition $S_i$ into subsets labelling components where the behaviour is similar. See Example~\ref{ex:SymSkew} for an instance of this phenomenon. In that case, $p$ will be in the image of some partition morphism; for Example~\ref{ex:SymSkew} this is further explained in Example~\ref{ex:partmorf}. Partition morphisms are defined below,  after another motivating example.   

\begin{ex}
We revisit a step in the classification of $\Sym$-invariant subvarieties of infinite affine space from \cite{nagpal-snowden:affinesymmetric}. We do so in the $\FI$-framework, where this corresponds to closed $\FIop$-subvarieties of the $\FIop$-variety $X(S)\coloneqq (\AA^1)^S=\AA^S$, where, for an injection $\pi:S \to T$, the map $X(\pi)$ is the corresponding projection $\AA^T \to \AA^S$. Let $Z$ be a proper closed $\FIop$-subvariety of $X$. By (the $\FI$-analogue of) \cite[Proposition 2.6]{nagpal-snowden:affinesymmetric}, the number of distinct coordinates of points in $Z(S)$ is bounded by some natural number $l$, independently of $S$. This means that for every $S \in \FI$, $Z(S)$ is contained in the union over all partitions of $S$ into subsets $T_1,\ldots,T_l$ of the morphism $\phi(T_1,\ldots,T_l):\AA^l \to X(S)$ that maps $(p_1,\ldots,p_l)$ to the tuple $(q_i)_{i \in S}$ with $q_i=p_j$ for the unique $j \in [l]$ with $i \in S_j$. The morphisms $\phi(T_1,\ldots,T_l)$ for varying $(T_1,\ldots,T_l) \in \FI^l$ form a partition morphism into $X$ from the constant $(\FIop)^l$-variety $Y:(T_1,\ldots,T_l) \mapsto \AA^l$, an object that is arguably simpler than $X$. In the definition that follows, we generalise this notion to the setting where $X$ is an arbitrary $(\FIop)^k \times \Vec$-variety.
\end{ex}

\begin{de}
Let $X$ be an $(\FIop)^k \times \Vec$-variety and let $Y$ be an $(\FIop)^l \times \Vec$-variety. A {\em partition morphism} $Y \to X$ consists of
the following data:
\begin{enumerate}
\item a map $\pi:[l] \to [k]$; and
\item for each $l$-tuple of finite sets $(T_1,\ldots,T_l)$ a morphism
\[
\phi(T_1,\ldots,T_l):Y(T_1,\ldots,T_l) \to
X\left(\;\bigsqcup_{j \in \pi^{-1}(1)} T_j\ ,\ldots,\bigsqcup_{j \in
\pi^{-1}(k)} T_j\right)
\]
of $\Vec$-varieties in such a manner that for any $l$-tuple $\iota=(\iota_j)_j
\in \Hom_\FI(S_j,T_j)^l$ the following diagram of $\Vec$-variety
morphisms commutes:
\[
\xymatrix{
Y(T_1,\ldots,T_l) \ar[rr]^-{\phi(T_1,\ldots,T_l)}
\ar[d]_{Y(\iota_1,\ldots,\iota_l)} && X\left(\bigsqcup_{j \in \pi^{-1}(1)}
	T_j,\ldots,\bigsqcup_{j \in \pi^{-1}(k)} T_j\right)
	\ar[d]^{X\left(\bigsqcup_{j \in \pi^{-1}(1)}
	\iota_j,\ldots,\bigsqcup_{j \in \pi^{-1}(k)} \iota_j\right)} \\
Y(S_1,\ldots,S_l) \ar[rr]_-{\phi(S_1,\ldots,S_l)} &&
X\left(\bigsqcup_{j \in \pi^{-1}(1)} S_j,\ldots,\bigsqcup_{j \in
\pi^{-1}(k)} S_j\right). }
\]
\end{enumerate}
\end{de}

\begin{re}
Note that if we take $k=l$ and $\pi=\id_{[k]}$, then a partition morphism is just a
morphism of $(\FIop)^k \times \Vec$-varieties.
\end{re}

There is a natural way to compose partition morphisms: if $(\pi,\phi)$
is a partition morphism $Y \to X$ as above and $(\rho,\psi)$ is a
partition morphism $Z \to
Y$, where $Z$ is an $(\FIop)^m \times \Vec$-variety, then $(\pi,\phi)
\circ (\rho,\psi)$ is the partition morphism given by the data $\pi \circ
\rho:[m] \to [k]$ and the morphisms
\begin{align*}
&\phi\left(\bigsqcup_{n \in \rho^{-1}(1)} R_n\ ,\ldots,\bigsqcup_{n \in
\rho^{-1}(l)} R_n \right) \
\circ \ \psi(R_1,\ldots,R_m): \\
&Z(R_1,\ldots,R_m) \to X\left(\bigsqcup_{n \in (\pi \circ
\rho)^{-1}(1)}
R_n\ ,\ldots,\bigsqcup_{n \in (\pi \circ \rho)^{-1}(k)} R_n \right).
\end{align*}
A tedious but straightforward computation shows that partition
morphisms turn the class of $(\FIop)^k \times \Vec$-varieties, with
varying $k$, into a category. We call this category $\bC$.

\begin{de} \label{de:Image}
Let $X$ be an $(\FIop)^k \times \Vec$-variety, $Y$ an $(\FIop)^l \times \Vec$-variety, and $(\pi,\phi):Y \to X$ a partition morphism. Let
$S_1,\ldots,S_k \in \FI$ and $V \in \Vec$. The (set-theoretic)
{\em image} of $(\pi,\phi)$ in $X(S_1,\ldots,S_k)(V)$ is defined
as the set of all points of the form $(X(\iota_1,\ldots,\iota_k)(V) \circ
\phi(T_1,\ldots,T_l)(V))(q)$ where $T_1,\ldots,T_l$ are finite sets,
$q$ is a point in $Y(T_1,\ldots,T_l)(V)$, and each $\iota_i$ is
a bijection from $S_i$ to $\bigsqcup_{j \in \pi^{-1}(i)} T_j$. The
partition morphism $(\pi,\phi)$ is called {\em surjective} if its image
in $X(S_1,\ldots,S_k)(V)$ equals $X(S_1,\ldots,S_k)(V)$ for all choices
of $S_1,\ldots,S_k$ and $V$.
\end{de}

\begin{re}
In the previous definition, each bijection $\iota_i$ induces
a partition of the set $S_i$. Furthermore,
if a partition morphism $(\pi,\phi)$ is surjective and for
every $i$ the $\Vec$-variety
\[X(\emptyset,\ldots,\emptyset,\{*\},\emptyset,\ldots,\emptyset),\]
where $\{*\}$ is a singleton in the $i$-th position, is nonempty, then the map $\pi$
is automatically surjective, so that $\pi$ induces a partition of $[l]$
into $k$ labelled, nonempty parts.
This is our reason for calling the morphisms in $\bC$ partition morphisms.
\end{re}

The following example rephrases Example~\ref{ex:SymSkew} in the
current terminology.

\begin{ex}\label{ex:partmorf}
Let $Z$ be the $\Vec$-variety that maps $V$ to $V \otimes V$, and let
$Z_1,Z_2$ be the closed $\Vec$-subvarieties consisting of symmetric
and skew-symmetric tensors, respectively.
Consider the $\FIop \times \Vec$-variety defined by $S \mapsto Z^S$, and for every finite set $S$ let $X(S)$ be the closed $\Vec$-subvariety given by the points $x = (x_s)_{s\in S} \in Z(V)^S$ such that each component $x_s$ is either symmetric or skew-symmetric.
Note that $X$ is a closed $\FIop\times \Vec$-subvariety.
Let $Y$ be the $(\FIop)^2 \times \Vec$-variety defined by 
\begin{displaymath}
Y(S_1,S_2) = Z_1^{S_1} \times
Z_2^{S_2}.
\end{displaymath}
We now construct a partition morphism $\phi: Y \to X$ as follows.
The map $\pi:[2] \to [1]$ is the only possible, and for every $V\in \Vec$ and $(S_1, S_2) \in \FIop^2$ the map
\begin{displaymath}
\phi(S_1,S_2)(V): Y(S_1,S_2)(V) = Z_1(V)^{S_1} \times Z_2(V)^{S_2}  \to X(S_1 \sqcup S_2)(V)
\end{displaymath}
is defined by:
\begin{displaymath}
((x_{s_1})_{s_1\in S_1},(x_{s_2})_{s_2\in S_2})   \mapsto (x_s)_{s\in S_1 \sqcup S_2}.
\end{displaymath}
Note that the partition morphism $\phi$ is surjective.
In particular, we say that $X$ \textit{is covered} by $Y$, and, as we have already hinted in Example~\ref{ex:SymSkew}, $Y$ is in some sense smaller than the assignment $S \mapsto Z^S$.
The fact that we can do this in general is the content of the {\em Parameterisation Theorem~\ref{thm:parameterisation}}.
\end{ex}

The following lemma is immediate.

\begin{lm} \label{lm:Preimage}
Let $X$ be an $(\FIop)^k \times \Vec$-variety, $X'$ a closed $(\FIop)^k \times \Vec$-subvariety of $X$, and let $(\pi,\phi)$ be a partition
morphism from an $(\FIop)^l \times \Vec$-variety $Y$ to $X$. Then
$Y'\coloneqq (\pi,\phi)^{-1}(X')$ defined by
\[
Y'(T_1,\ldots,T_l) \coloneqq \phi(T_1,\ldots,T_l)^{-1}
\left(X'\left(\;\bigsqcup_{j \in \pi^{-1}(1)}T_j\ ,\ldots, \bigsqcup_{j
\in \pi^{-1}(k)}T_j \right)\right)
\]
is a closed $(\FIop)^l \times \Vec$-subvariety of $Y$, and the data of
$\pi$ together with the restrictions of the morphisms $\phi(T_1,\ldots,T_l)$ gives
a partition morphism from $Y'$ to $X$. Moreover, if $(\pi,\phi)$ is surjective, then so is its restriction to $Y' \to X'$.
\end{lm}

The following easy proposition is crucial in our approach to the
main theorem.

\begin{prop}\label{prop:coverNoetherian}
If $(\pi,\phi)$ is a surjective partition morphism from $Y$ to $X$, and $Y$ is a Noetherian $(\FIop)^l \times \Vec$-variety, then $X$ is a Noetherian
$(\FIop)^k \times \Vec$-variety.
\end{prop}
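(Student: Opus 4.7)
The plan is to prove Noetherianity of $X$ by lifting descending chains to $Y$, using $(\pi,\phi)$ as a bridge. Given a descending chain $X_1 \supseteq X_2 \supseteq \cdots$ of closed $(\FIop)^k \times \Vec$-subvarieties of $X$, I would form the preimages $Y_n \coloneqq (\pi,\phi)^{-1}(X_n)$ furnished by Lemma~\ref{lm:Preimage}. Each $Y_n$ is a closed $(\FIop)^l \times \Vec$-subvariety of $Y$, and since the $X_n$ descend, so do the $Y_n$. By the assumed Noetherianity of $Y$, there is some $N$ such that $Y_n = Y_{n+1}$ for all $n \geq N$.

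The second step is to transfer this stabilisation back to $X$. By the final assertion of Lemma~\ref{lm:Preimage}, the restriction of $(\pi,\phi)$ to $Y_n \to X_n$ is again a surjective partition morphism, and likewise for $Y_{n+1} \to X_{n+1}$. Hence, for every $k$-tuple $(S_1,\ldots,S_k)$ of finite sets and every $V \in \Vec$, the set $X_n(S_1,\ldots,S_k)(V)$ coincides with the image of $(\pi,\phi)|_{Y_n}$ in $X(S_1,\ldots,S_k)(V)$ in the sense of Definition~\ref{de:Image}. Once $Y_n = Y_{n+1}$, these images agree slot by slot, so $X_n(S_1,\ldots,S_k)(V) = X_{n+1}(S_1,\ldots,S_k)(V)$ as subsets of $X(S_1,\ldots,S_k)(V)$ for every $(S_1,\ldots,S_k)$ and $V$.

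To conclude, I would observe that $X_n(S_1,\ldots,S_k)$ and $X_{n+1}(S_1,\ldots,S_k)$ are reduced closed subvarieties of the $\Vec$-variety $X(S_1,\ldots,S_k)$; since their $\overline{K}$-points agree for all $V \in \Vec$, they coincide as closed $\Vec$-subvarieties, and doing this for each $(S_1,\ldots,S_k)$ gives $X_n = X_{n+1}$. The argument is essentially formal and I do not expect a real obstacle: the substantive content has been packaged into Lemma~\ref{lm:Preimage} (closedness of preimages together with surjectivity of the restricted partition morphism). The only delicate point is the final upgrade from pointwise set-theoretic equality of the images to equality of the underlying closed $(\FIop)^k \times \Vec$-subvarieties, but this is standard once one uses that the $X_n(S_1,\ldots,S_k)$ are reduced and has access to $V$-points for arbitrarily large $V \in \Vec$.
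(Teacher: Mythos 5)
Your proposal is correct and follows essentially the same route as the paper's proof: take preimages via Lemma~\ref{lm:Preimage}, use Noetherianity of $Y$ to stabilise the lifted chain, and then use surjectivity of the restricted partition morphisms to conclude that each $X_n$ is determined by $Y_n$. Your extra remarks about reducedness and closed points merely spell out the paper's phrase that ``$X_i$ is uniquely determined by $Y_i$'', so there is no substantive difference.
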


\begin{proof}
Let $X_1 \supseteq X_2 \supseteq \ldots$ be a descending chain of closed
$(\FIop)^k \times \Vec$-subvarieties. By Lemma~\ref{lm:Preimage},
the preimages $Y_i\coloneqq (\pi,\phi)^{-1}(X_i)$ are closed $(\FIop)^l \times \Vec$-subvarieties of $Y$. Hence the chain $Y_1 \supseteq Y_2
\supseteq \ldots$ stabilises by assumption.  The surjectivity of
$(\pi,\phi)$ implies the surjectivity of its restriction to $Y_i \to X_i$. This implies that $X_i$ is uniquely determined
by $Y_i$, and hence the chain $X_1 \supseteq X_2 \supseteq \ldots$
stabilises at the same point.
\end{proof}

\subsection{Product type}\label{ssec:producttype}

We now introduce the $(\FIop)^k \times \Vec$ varieties of product
type. Essentially, these are the varieties from Remark~\ref{re:Product},
but for our proofs we will need a finer control over these
products. Therefore, we will work over a general base $\Vec$-variety $Y$,
and keep track of the ``constant parts'' $B_i$ of the $\Vec$-varieties whose
products we consider.

\begin{de}\label{de:producttype}
Let $Y$ be a $\Vec$-variety and $k,n_1,\ldots,n_k\in\ZZ_{\geq 0}$.
For each $i \in [k]$, let $B_i$ be a $\Vec$-subvariety of $Y \times \AA^{n_i}$, and $Q_i$ be a pure polynomial functor.
By construction each $\Vec$-variety $B_i \times Q_i$ has a morphism to $Y$ induced by the projection $Y \times \AA^{n_i} \to Y$.
We define the $(\FIop)^k \times \Vec$-variety $Z=[Y;B_1 \times Q_1,\ldots,B_k \times Q_k]$ via
\[Z(S_1,\ldots,S_k)\coloneqq \underbrace{(B_1 \times Q_1) \times_Y \ldots \times_Y (B_1 \times Q_1)}_{\text{cardinality-of-}S_1 \text{ times}} \times_Y (B_2 \times Q_2) \times_Y \ldots \times_Y (B_k \times Q_k),\]
where for every index $i \in [k]$ the fibre product over $Y$ of $B_i\times Q_i$ with itself is taken
$|S_i|$ times, and these copies are
labelled by the elements of $S_i$; and where the morphism $Z(T_1,\ldots,T_k) \to
Z(S_1,\ldots,S_k)$ corresponding to $\iota:S \to T$ is the
projection as in Remark~\ref{re:Product}. 
We also write the above product in a more compact notation as
\[
(B_1\times Q_1)^{S_1}_Y \times_Y \cdots \times_Y (B_k \times Q_k)^{S_k}_Y.
\]
We say that $Z$ is an $(\FIop)^k \times \Vec$-variety of
\textit{product type} (over $Y$).
\end{de}

Note that $Z(S_1,\ldots,S_k)$ is naturally a closed $\Vec$-subvariety of
\[ Y \times \prod_{i=1}^k (\AA^{n_i} \times Q_i)^{S_i},\]
where the product is over $K$. 
Moreover, if $k=0$, then by definition $Z = Y$.

When we talk of $(\FIop)^k \times \Vec$-varieties of product type, we
will always specify each $B_i$ together with its closed embedding in
$Y \times \AA^{n_i}$;
the reason being that, in the proof of the Main Theorem, we aim to argue by induction on both $Y$ and $n_i$.

\begin{re}\label{re:reformulation}
The settings of Theorem~\ref{thm:main} and Theorem~\ref{thm:Main2} can
be rephrased in our current terminology as follows.
Consider $\Vec$-varieties $Z_1, \dots, Z_k$. Then for every $i \in
[k]$ there exist $n_i \in \ZZ_{\geq 0}$, a closed subvariety $A_i \subseteq \AA^{n_i}$, and a pure polynomial functor $Q_i$
such that $Z_i \subseteq A_i \times Q_i$.
Define $Y$ to be a point, and $B_i \coloneqq Y \times A_i$.
Then the variety $Z_1^\NN \times \cdots \times Z_k^\NN$ of Theorem~\ref{thm:Main2} is a subvariety of the product-type $(\FIop)^k\times \Vec$-variety \[ [Y; B_1\times Q_1, \dots, B_k \times Q_k],\]
with $k=1$ being the special case addressed in Theorem~\ref{thm:main}.
\end{re}

\begin{re} In \cite{draisma-eggermont-farooq:components}, for $\FIop$-varieties (no dependence on $\Vec$), the notion of product type is more restrictive. Essentially,
there the last three authors considered a single finite-dimensional affine variety $Z$ with a
morphism to a finite-dimensional, irreducible, affine variety $Y$, with
the additional requirement that $K[Z]$ is a free $K[Y]$-module. This
then ensures that each irreducible component of $Z^S$ maps dominantly to
$Y$. In \cite{draisma-eggermont-farooq:components} this is used to count the orbits of $\Sym(S)$
on these irreducible components.
\end{re}

The following example describes the partition morphisms between product-type varieties.
It is particularly relevant as this is the shape of the partition morphisms we will be dealing with in our proof of the Parameterisation Theorem~\ref{thm:parameterisation}. 

\begin{ex} \label{ex:PMProductType}
Let $Z'\coloneqq [Y';B_1' \times Q_1',\ldots,B_l' \times Q_l']$ and $Z\coloneqq [Y;B_1
\times Q_1,\ldots,B_k \times Q_k]$ be an $(\FIop)^l \times \Vec$-variety
and an $(\FIop)^k \times \Vec$-variety of product type over
$Y'$ and $Y$, respectively.
We want to construct a partition morphism $(\pi, \phi): Z' \to Z$.
Consider the following data:
\begin{itemize}
    \item let $\pi:[l] \to [k]$ be any map;
    \item let $\alpha:Y' \to Y$ be a morphism of $\Vec$-varieties;
    \item and for each $j \in [l]$ let $\beta_j:B_j' \times Q_j' \to B_{\pi(j)} \times Q_{\pi(j)}$ be a morphism of $\Vec$-varieties such that the following diagram commutes:
    \begin{equation}\label{eq:Rectangle}
    \xymatrix{
    B_j' \times Q_j' \ar[d] \ar[r]^{\beta_j} & B_{\pi(j)} \times Q_{\pi(j)} \ar[d]\\
    Y' \ar[r]_{\alpha} & Y.}
    \end{equation}
\end{itemize}
For each $(T_1,\ldots,T_l) \in \FI^l$ we define the morphism of $\Vec$-varieties
\begin{displaymath}
\phi(T_1,\ldots,T_l)\colon Z'(T_1,\ldots,T_l) \to Z\left (\;\bigsqcup_{j \in
\pi^{-1}(1)}T_j,\ldots,\bigsqcup_{j \in \pi^{-1}(k)} T_j\right)
\end{displaymath}
as follows.
Let $S_i \coloneqq \bigsqcup_{j \in \pi^{-1}(i)}T_j$, then for any $V \in \Vec$ the element
\[
((b'_{j,t},q'_{j,t})_{t \in T_j})_{j \in [l]} \in (B_1' \times Q_1')^{T_1}_{Y'}(V)\times_{Y'} \cdots \times_{Y'}(B_l' \times Q_l')^{T_l}_{Y'}(V)
\]
is mapped to the element
\[
(((\beta_{j}(V)(b'_{j,t},q'_{j,t}))_{t \in
T_j})_{j \in \pi^{-1}(i)})_{i \in [k]} \in (B_1 \times Q_1)^{S_1}_Y (V) \times_Y \cdots \times_Y (B_k \times Q_k)^{S_k}_Y (V).
\]
By construction, the pair $(\pi,\phi)$ is a partition morphism $Z' \to Z$.
Conversely, every partition morphism $Z' \to Z$ is of this form.
Indeed, from a general partition morphism $Z' \to Z$,
$\alpha$ is recovered by taking all $T_j$ empty and $\beta_j$ is recovered
by taking $T_j$ a singleton and all $T_{j'}$ with $j' \neq j$ empty.
That \eqref{eq:Rectangle} commutes then follows
by applying the commuting diagram from the definition of a partition
morphism to the morphism $(\emptyset,\ldots,\emptyset,\ldots,\emptyset)
\to (\emptyset,\ldots,\{*\},\ldots,\emptyset)$ in $\FI^l$. 
\end{ex}

\subsection{The leading monomial ideal}\label{ssec:leadingmonideal}

The following definition gives a size measure for a closed subvariety $B \subseteq Y
\times \AA^n$.

\begin{de}
Let $Y$ be a $\Vec$-variety, $n\in\ZZ_{\geq0}$ and $B$
a closed $\Vec$-subvariety of $Y \times \AA^n$. For $V \in \Vec$ consider
the ideal $\II(B(V))$ of $K[Y(V)][x_1,\ldots,x_n]$ defining $B(V)$.
We fix the lexicographic order on monomials in $x_1,\ldots,x_n$, and
denote by $\LM(B)$ the set of those monomials that appear as leading
monomials of {\em monic} polynomials in $\II(B(V))$, i.e., those with
leading coefficient $1 \in K[Y(V)]$.
\end{de}

Indeed, $LM(B)$ is well-defined:

\begin{lm} \label{lm:LM}
The set $\LM(B)$ does not depend on the choice of $V$.
\end{lm}

\begin{proof}
Let $V \in \Vec$ and consider the linear maps $\iota:0 \to V$ and $\pi:V
\to 0$. If $f \in \II(B(V))$ is monic with leading monomial $x^u$, then
applying $Y(\iota)^\#$ to all coefficients of $f$ yields a polynomial in
$\II(B(0))$ which is monic with leading monomial $x^u$.
This shows that the leading monomials of monic polynomials
in $\II(B(V))$ remain leading monomials of monic elements in $\II(B(0))$.
One obtains the converse inclusion by applying $Y(\pi)^\#$.
\end{proof}

The following lemma monitors the size of $\LM$ of the constant parts after a base change in product-type varieties.
It is used in Proposition~\ref{prop:shift}.

\begin{lm} \label{lm:BaseChangeLM}
Let $Y'\to Y$ be a morphism of $\Vec$-varieties, let $B$ be a closed $\Vec$-subvariety
of $Y \times \AA^n$, and define $B'\coloneqq Y' \times_Y B \subseteq
Y' \times \AA^n$. Then $\LM(B') \supseteq \LM(B)$.
\end{lm}

\begin{proof}
Pulling back a monic equation for $B(V)$ along $Y'(V) \times \AA^n \to
Y(V) \times \AA^n$ yields a monic equation for $B'(V)$ with the same
leading monomial.
\end{proof}

\subsection{Shifting over tuples of finite sets}\label{ssec:shifting}
% We now introduce an operation that has had much fortune to prove results in the infinite dimensional setting via the functorial language.
Shifting over a finite set is a standard technique in the theory of $\FI$-modules \cite{church-ellenberg-farb:FImodules}, and was also used by the last three authors in \cite{draisma-eggermont-farooq:components} to turn certain $\FIop$-varieties into products.
The third author used the operation of shifting over a vector space in \cite{draisma} to prove what became ``The Embedding Theorem'' for $\GL$-varieties in \cite[Theorems~4.1,4.2]{bik-draisma-eggermont-snowden}. 
Here we describe this operation in the context of $(\FIop)^k\times\Vec$-varieties.

\begin{de}
Let $X$ be an $(\FIop)^k \times \Vec$-variety and let
$S=(S_1,\ldots,S_k) \in \FI^k$. Then the {\em shift
$\Sh_{S} X$ of $X$ over $S$} is the $(\FIop)^k \times \Vec$-variety defined by
\[ (\Sh_S X)(T_1,\ldots,T_k)\coloneqq X(S_1 \sqcup T_1,\ldots,S_k
\sqcup T_k) \]
and, for injections $\iota_i:T_i \to T'_i$,
\[ (\Sh_S X)(\iota_1,\ldots,\iota_k)\coloneqq X(\id_{S_1} \sqcup
\iota_1,\ldots,\id_{S_k} \sqcup \iota_k). \qedhere \]
\end{de}

\begin{re}
Consider an tuple $S = (S_1,\dots, S_k)$ in $(\FIop)^k$ and define the covariant functor $\Sh_S: (\FIop)^k\times\Vec\to (\FIop)^k\times\Vec$ by assigning to each tuple $(T_1, \dots, T_k)$ the tuple $(S_1 \sqcup T_1, \dots, S_k \sqcup T_k)$ and to each morphism $ \iota: (\iota_1, \dots, \iota_k): (T_1, \dots, T_k) \to (T_1',\dots, T_k')$ the morphism $\iota \sqcup \id_S$. 
In particular $\Sh_S X$ is the composition $X \circ \Sh_S$.
\end{re}
\begin{re}
Let $V \in \Vec$.
While $\Sh_S X(T_1, \dots, T_k)(V)$ and $X(S_1 \sqcup T_1, \dots, S_k \sqcup T_k)(V)$ coincide as sets, the action induced by functoriality of the $k$ copies of the symmetric group on them is different.
Indeed, the groups $\Sym(T_1)\times \cdots \times \Sym(T_k)$ and $\Sym(S_1 \sqcup T_1) \times \cdots \times \Sym(S_k \sqcup T_k)$ act, respectively, on the former and on the latter.
\end{re}

The following proposition describes the shift operation on product-type varieties.

\begin{prop}\label{prop:shift}
The shift $\Sh_S Z$ over $S=(S_1,\ldots,S_k)$ of an $(\FIop)^k \times \Vec$-variety $Z \coloneqq [Y;B_1 \times Q_1,\ldots,B_k \times Q_k]$ of product
type is itself isomorphic to
a variety of product type: \[\Sh_S Z \cong
[Y'; B_1' \times Q_1,\ldots,B_k' \times Q_k]\] with
\begin{align*}
    Y'&\coloneqq (B_1 \times Q_1)^{S_1}_Y \times_Y \ldots \times_Y  (B_k \times Q_k)^{S_k}_Y, \text{ and}\\
    B_i'&\coloneqq Y' \times_Y B_i.
\end{align*}
Furthermore, each $B'_i$ is
naturally a $\Vec$-subvariety of $Y' \times \AA^{n_i}$, and we have $\LM(B_i')\supseteq \LM(B_i)$.
\end{prop}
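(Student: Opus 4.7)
The plan is to establish the isomorphism by unfolding the definition of $\Sh_S Z$, splitting the fibre products indexed by $S_i \sqcup T_i$, and performing a base change from $Y$ to $Y'$.

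First, I would fix $(T_1,\ldots,T_k) \in \FI^k$ and compute
\[
(\Sh_S Z)(T_1,\ldots,T_k) = Z(S_1 \sqcup T_1,\ldots,S_k \sqcup T_k) = \prod_{i=1}^{k}{}_{Y}\, (B_i \times Q_i)^{S_i \sqcup T_i}_Y,
\]
where $\prod_Y$ denotes an iterated fibre product over $Y$. For each $i$, the universal property of fibre products gives
\[
(B_i \times Q_i)^{S_i \sqcup T_i}_Y \cong (B_i \times Q_i)^{S_i}_Y \times_Y (B_i \times Q_i)^{T_i}_Y,
\]
and assembling these isomorphisms across $i \in [k]$ yields
\[
(\Sh_S Z)(T_1,\ldots,T_k) \cong Y' \times_Y \prod_{i=1}^{k}{}_{Y}\, (B_i \times Q_i)^{T_i}_Y,
\]
where $Y' = \prod_{i}{}_Y (B_i \times Q_i)^{S_i}_Y$ is the claimed new base.

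Next I would invoke the categorical identity that for any morphisms $A \to Y$ and any $T \in \FI$, one has $(Y' \times_Y A)^{T}_{Y'} \cong Y' \times_Y A^{T}_Y$, and likewise for iterated fibre products over $Y$ of several factors: base changing to $Y'$ commutes with taking fibre products. Setting $B_i' := Y' \times_Y B_i$, so that $B_i' \times Q_i \cong Y' \times_Y (B_i \times Q_i)$ as $\Vec$-varieties over $Y'$, this identity rewrites the right-hand side above as
\[
\prod_{i=1}^{k}{}_{Y'}\, (B_i' \times Q_i)^{T_i}_{Y'} = [Y'; B_1' \times Q_1,\ldots,B_k' \times Q_k](T_1,\ldots,T_k).
\]
To upgrade this pointwise isomorphism to an isomorphism of $(\FIop)^k \times \Vec$-varieties, I would check naturality with respect to morphisms $(\iota_1,\ldots,\iota_k): (T_1,\ldots,T_k) \to (T_1',\ldots,T_k')$ in $\FI^k$: on the shift side the map is $(\id_{S_i} \sqcup \iota_i)_i$, and the splitting isomorphism turns this into the identity on the $S_i$-factor combined with the projection on the $T_i$-factor, which is exactly the transition morphism on the product-type side.

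For the last assertion, $B_i'$ embeds naturally in $Y' \times \AA^{n_i}$ via the base-changed closed immersion
\[
B_i' = Y' \times_Y B_i \hookrightarrow Y' \times_Y (Y \times \AA^{n_i}) \cong Y' \times \AA^{n_i},
\]
and the inclusion $\LM(B_i') \supseteq \LM(B_i)$ is immediate from Lemma~\ref{lm:BaseChangeLM} applied to the structure morphism $Y' \to Y$. The main potential obstacle is purely bookkeeping: tracking the two commuting operations (fibre product over the base and products indexed by finite sets) and verifying that the naturality squares commute, but once the base-change identity for fibre products is isolated, everything reduces to routine manipulation.
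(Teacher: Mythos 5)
Your proposal is correct and matches the paper's intent: the paper dismisses the isomorphism as ``straightforward'' and derives $\LM(B_i')\supseteq\LM(B_i)$ from Lemma~\ref{lm:BaseChangeLM}, which is exactly the route you take, with the splitting of $(B_i\times Q_i)^{S_i\sqcup T_i}_Y$ and the compatibility of base change with fibre products spelled out explicitly. Your naturality check in $\FI^k$ is the right thing to verify and is precisely the bookkeeping the paper leaves to the reader.
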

\begin{proof}
Straightforward. The last statement follows from
Lemma~\ref{lm:BaseChangeLM}.
\end{proof}

%In analogy with \cite[Lemma~14]{draisma} and \cite[Section~3.3]{draisma-eggermont-farooq:components}, the shift operation doesn't increase the ``complexity'' of product-type varieties.
%See the next section for details, and in particular Section~\ref{sssec:ProductTypeOrder} for the order relation $\Sh_S Z \preceq Z$.

%For the definition of
%$\gamma_j$, we use that
%$B_j' \subseteq Y' \times \AA^{n_j'}$ and $B_{\pi(j)} \subseteq Y \times
%\AA^n$ for certain $n'_j,n_{\pi(j)}$. Then dual to the morphism
%$\beta_j$ of $\Vec$-varieties,
%represented by the middle arrow here:
%\[ Y' \times \AA^{n'_j} \times Q_j' \supseteq B_j' \times Q_j' \to
%B_{\pi(j)} \times Q_{\pi(i)} \subseteq Y \times
%\AA^{n_{\pi(j)}} \times
%Q_{\pi(i)}, \]
%is a homomorphism $\beta_j^\#$

%Now $\beta_j^\#$ expresses coordinates on $Y$ as regular
%functions on $Y'$ via
%the dual $\alpha^\#$ of $\alpha$. Furthermore, the degree of the coordinates on
%$\AA^{n_{\pi(j)}}$ is $0$, while the degree of the coordinates on $Q_j'$ is positive.
%Therefore, the images under $\beta_j^\#$ of the former cannot involve the
%latter and must therefore be regular functions on $B_j'$.  This means
%that the $B_{\pi(j)}$-component of $\beta_j$ does not depend on the
%factor $Q_j'$. Consequently, $\beta_j$ induces a morphism
%$\gamma_j:B_j'
%\to B_{\pi(j)}$ that makes both small squares commute.

\subsection{Well-founded orders}\label{ss:complexity}
In this paper a {\em pre-order} $\preceq$ on a class is a reflexive and
transitive relation. We also write $B
\succeq A$ for $A \preceq B$. Furthermore, write $A \prec B$ or $B \succ
A$ to mean that $A \preceq B$ but not $B \preceq A$. The
pre-order is well-founded if it admits no infinite
strictly decreasing chains $A_1 \succ A_2 \succ \ldots$.

In this section we first recall a well-founded pre-order
on polynomial functors. Building on it, we define well-founded pre-orders
\begin{itemize}
    \item on varieties appearing in the definition of $(\FIop)^k \times \Vec$-varieties of product type,
    \item on product-type varieties, and
    \item on closed subvarieties of a fixed product-type variety.
\end{itemize}

\subsubsection{Order on polynomial functors}\label{sssec:orderpolfun}
\begin{de}\label{de:orderpolfun}
For polynomial functors $P,Q$, we write $P \preceq Q$ if $P
\cong Q$ or else, for the largest $e$ with $P_e \not \cong
Q_e$, $P_e$ is a quotient of $Q_e$.
\end{de}

This is a well-founded partial
order on polynomial functors, see \cite[Lemma 12]{draisma}.

\subsubsection{Order on $\Vec$-varieties of type $B\times Q$}\label{sssec:orderVecembedded}
Consider $\Vec$-varieties $Y, Y'$, integers $n, n'$, pure polynomial functors $Q, Q'$, and $\Vec$-subvarieties $B \subset Y \times \AA^n$, $B' \subset Y' \times \AA^{n'}$.
We say that $B' \times Q' \preceq B \times Q$ if:
\begin{enumerate}
    \item $Q' \prec Q$ in the order of Definition~\ref{de:orderpolfun}; or
    \item $Q' \cong Q$, $n' = n$ and $\LM(B') \supseteq \LM(B)$.
\end{enumerate}
This is a pre-order on $\Vec$-varieties of this type.

\begin{re}
We remark that $\preceq$ is defined on $\Vec$-varieties {\em with
a specified product decomposition} $B \times Q$ where $B$ is a
$\Vec$-variety {\em with a specified closed embedding into a specified
product} $Y \times \AA^n$ of a $\Vec$-variety $Y$ and some $n$. It is
not a pre-order on $\Vec$-varieties without further data.
\end{re}

\begin{lm} \label{lm:Order}
The pre-order on $\Vec$-varieties defined as above is well-founded.
\end{lm}

\begin{proof}
Suppose we had an infinite strictly decreasing chain
\[ B_1 \times Q_1 \succ B_2 \times Q_2 \succ \ldots \]
with $B_i \subseteq
Y_i \times \AA^{n_i}$. Then we have $Q_1 \succeq Q_2 \succeq
\ldots$. By the well-foundedness of $\succeq$ on polynomial
functors, there exists a $j \geq 1$ such that both $Q_i$ and $n_i$ are constant for $i\geq j$. But then $\LM(B_i) \subsetneq \LM(B_{i+1})\subsetneq \ldots$, which contradicts Dickson's lemma.
\end{proof}

\subsubsection{Order on product-type varieties}
\label{sssec:ProductTypeOrder}
Consider an $(\FIop)^k \times \Vec$-variety $Z \coloneqq [Y; B_1 \times Q_1,
\dots, B_k \times Q_k]$, and an $(\FIop)^l \times \Vec$-variety $Z'\coloneqq
[Y'; B_1' \times Q_1', \dots, B_l' \times Q_l']$. We say that $Z'
\preceq Z$ if there exists a map $\pi:[l] \to [k]$ with the
following properties:
\begin{enumerate}
\item $B_j' \times Q_j' \preceq B_{\pi(j)} \times Q_{\pi(j)}$ holds for all $j \in [l]$, and
\item for all $j$ whose
$\pi$-fibre $\pi^{-1}(\pi(j))$ has cardinality at least $2$ we have $B_j' \times Q_j' \prec B_{\pi(j)} \times Q_{\pi(j)}$.
\item If $\pi$ is a bijection, then either at least one of
the inequalities in (1) is strict, or else $Y'$ is a closed
$\Vec$-subvariety of $Y$.
\end{enumerate}

\begin{lm} \label{lm:Smallerlk}
Suppose $Z' \preceq Z$ is witnessed by $\pi:[l] \to [k]$ and suppose that at least one of the following
holds:
\begin{itemize}
\item $l \neq k$, or
\item at least one of the inequalities in (1) is strict.
\end{itemize}
Then we have $Z' \prec Z$.
\end{lm}

\begin{proof}
Assume, on the contrary, that 
$\sigma:[k] \to [l]$ witnesses $Z \preceq Z'$. Construct a
directed graph $\Gamma$ with vertex set $[l] \sqcup [k]$ and an arrow
from each $j \in [l]$ to $\pi(j)$ and an arrow from each $i \in [k]$ to
$\sigma(i)$. Like any digraph in which each vertex has out-degree $1$,
$\Gamma$ is a union of disjoint directed cycles (here of even length)
plus a number of trees rooted at vertices in those cycles and directed
towards those roots. Moreover, those cycles have the same number of
vertices in $[l]$ as in $[k]$.

The assumptions imply that at least one of the vertices of $\Gamma$ does not lie
on a directed cycle. Without loss of generality, there exists an $i \in
[k]$ not in any cycle such that $j\coloneqq \sigma(i)$ lies on a cycle. Let $n$
be half the length of that cycle, so that $(\sigma \pi)^n (j)=j$. Then
we have
\[ B_j' \times Q_j' \preceq B_{\pi(j)} \times Q_{\pi(j)}
\preceq
\ldots \preceq B_{\pi (\sigma \pi)^{n-1} (j)} \times Q_{\pi
(\sigma \pi)^{n-1}(j)} \prec B_{(\sigma \pi)^n (j)}' \times
Q_{(\sigma \pi)^n (j)}' = B_j' \times Q_j'\]
where the strict inequality holds because $\sigma^{-1}(j)$ has at
least two elements: $i$ and $\pi(\sigma \pi)^{n-1}(j)$. By
transitivity of the pre-order from Section~\ref{sssec:orderVecembedded}, we
find $B_j' \times Q_j' \prec B_j' \times Q_j'$, which however
contradicts the reflexivity of that pre-order. 
\end{proof}

\begin{lm}\label{lm:wellfoundedproduct}
The relation $\preceq$ is a well-founded pre-order on
varieties in $\bC$ of product type.
\end{lm}

\begin{proof}
For reflexivity we may take $\pi$ equal to the identity.
For transitivity, if $\pi:[l] \to [k]$ witnesses $Z' \preceq
Z$ and $\sigma:[k] \to [m]$ witnesses $Z \preceq Z''$, then
$\tau\coloneqq \sigma \circ \pi$ witnesses $Z' \preceq Z''$---here we note
that if $|\tau^{-1}(\tau(j))|>1$ for some $j \in [l]$, then either
$|\pi^{-1}(\pi(j))|>1$ or else $|\sigma^{-1}(\sigma(\pi(j)))|>1$; in
both cases we find that $B_j' \times Q_j' \prec B_{\tau(j)}'' \times
Q_{\tau(j)}''$.

For well-foundedness, suppose that we had a sequence $Z_1 \succ Z_2
\succ Z_3 \succ \ldots$, where
\[ Z_i=[Y_i;B_{i,1} \times Q_{i,1},\ldots,B_{i,k_i} \times
Q_{i,k_i}], \]
and where $\pi_i:[k_{i+1}] \to [k_i]$ is a witness to $Z_i \succ
Z_{i+1}$. We note that $k_i > 0$ for all $i$. Otherwise $0=k_i=k_{i+1}=\ldots$ and
then $Z_i=Y_i \succ Z_{i+1}=Y_{i+1} \succ \ldots $ implies
that $Y_i \supsetneq Y_{i+1} \supsetneq \ldots$, which
contradicts the Noetherianity of the $\Vec$-variety $Y_i$, see Theorem~\ref{thm:Noetherian}.

From the chain, we construct an infinite rooted forest with vertex set $[k_1]
\sqcup [k_2] \sqcup \ldots$ as follows: $[k_1]$ is the set of roots,
and we attach each $j \in [k_{i+1}]$ via an edge with $\pi_i(j)$; the
latter is called the {\em parent} of the former. We further label each
vertex $j \in [k_i]$ with the product $B_{i,j} \times Q_{i,j}$.

We claim that $\pi_i$ is an injection for all $i \gg 0$, i.e., that there
are only finitely many vertices with more than one child. Indeed, if not,
then by K\"onig's lemma the forest would have an infinite path starting
at a root in $[k_1]$ and passing through infinitely many vertices with
at least two children. By construction, the labels $B \times Q$ decrease
weakly along such a path and strictly whenever going from a
vertex to one of its more than one children, a contradiction
to Lemma~\ref{lm:Order}.

For even larger $i$, the $k_i$ are constant, say equal to $k$, and hence
the $\pi_i$ are bijections.  After reordering, we may assume that the $\pi_i$ all equal the identity on $[k]$.  Moreover, for all such $i$ we still have $B_{i,j}
\times Q_{i,j} \succeq B_{i+1,j} \times Q_{i+1,j} \succeq \ldots$ for
all $j \in [k]$, and all these chains stabilise.  When they do, we have
$Y_i \supsetneq Y_{i+1} \supsetneq \ldots$, which is a strictly decreasing chain
of $\Vec$-varieties---but this again contradicts the Noetherianity
of $\Vec$-varieties.
\end{proof}

\subsubsection{Order on closed subvarieties of product-type varieties in $\bC$}\label{sssec:orderclosedproducttype}
Consider the $(\FIop)^k \times \Vec$-variety $Z = [Y; B_1 \times Q_1,
\dots, B_k \times Q_k]$ and let $X$ be a closed $(\FIop)^k \times
\Vec$-subvariety of $Z$; $X$ is not required to be of product type.
We define
\[\delta_X \coloneqq \min_{(S_1,\ldots,S_k)\in\FI^k}\left\{\sum_{i = 1}^k |S_i| :\;
X(S_1, \dots, S_k) \neq Z(S_1,\ldots,S_k)\right\}\]
Let $X$ and $X'$ be closed $(\FIop)^k\times \Vec$-subvarieties of $Z$,
then we say $X' \preceq X$ if $\delta_{X'} \leq \delta_X$.
This is a well-founded pre-order on the $(\FIop)^k\times\Vec$-subvarieties of $Z$.

\begin{re}\label{re:minimaldefiningFItuple}
If $f$ is a nonzero equation for $X(S_1, \dots, S_k)(V)$ with
$\sum_i |S_i|=\delta_X$, then $f$ may still ``come from smaller sets''.
More specifically, there might exist a $k$-tuple $(S_1',
\dots, S_k')$ with $|S_i'|\leq |S_i|$ for all $i \in [k]$
and with strict inequality for at least one $i$, an
$\FI^k$-morphism $\iota \coloneqq (\iota_1, \dots, \iota_k):
(S_1', \dots, S_k') \to (S_1, \dots, S_k)$, and an element
$f'\in K[Z(S_1', \dots, S_k')(V)]$ such that $Z(\iota)(V)^\#(\;f')
= f$.
This is related to Remark~\ref{re:Richer}.
The following example demonstrates this phenomenon.
\end{re}
\begin{ex}\label{ex:fewer}
Consider the $\FIop \times \Vec$-variety $Z \coloneqq [\Spec(K) ; \AA^1]$.
The coordinate ring $K[Z(S)]$ is isomorphic to the polynomial ring over $K$ in $|S|$ variables.
Let $n\in\ZZ_{>0}$ and define the proper closed variety $X$ of $Z$ by
\[
X(S) \coloneqq \begin{cases}
Z(S) & \text{ for } |S| < n;\\
\emptyset & \text{ otherwise}.
\end{cases}
\]
Then $\delta_X$ is equal to $n$ and computed by the element
$1 \in K[Z([n])]$, which is the image of $1 \in
K[Z(\emptyset)]$ under the natural map $K[Z(\emptyset)] \to K[Z([n])]$.
\end{ex}

\section{Covering $(\FIop)^k \times \Vec$-varieties by
smaller ones}  \label{sec:Core}

\subsection{The Parameterisation Theorem}
The goal of this section is to prove the following core
result, which says that any proper closed subvariety of an
$(\FIop)^k \times \Vec$-variety of product type is covered
by finitely many smaller such varieties.

\begin{thm}[Parameterisation Theorem]\label{thm:parameterisation}
Consider an $(\FIop)^k \times \Vec$-variety $Z$ of product type and let $X \subsetneq Z$ be a proper closed $(\FIop)^k \times \Vec$-subvariety.
Then there exist a finite number of quadruples consisting of:
\begin{itemize}
\item an $l \in \ZZ_{\geq 0}$;
\item an $(\FIop)^l \times \Vec$-variety $Z'$ of product
type with $Z' \prec Z$;
\item a $k$-tuple $S = (S_{1}, \dots, S_{k}) \in \FI^{k}$; and
\item a partition morphism $(\pi, \phi) : Z' \to \Sh_{S} Z$;
\end{itemize}
such that for any $T_1,\ldots,T_k \in \FI^k$, any $V \in
\Vec$, and any $p \in X(T_1,
\ldots, T_k)(V)$ there exist: one of these finitely many quadruples; finite
sets $U_1,\ldots,U_k$; and bijections $\sigma_i:T_i \to S_i
\sqcup U_i$; such that $p$ lies in the image under
$Z(\sigma_1,\ldots,\sigma_k)(V)$ of the image of $(\pi,\phi)$
in $\Sh_S(Z)(U_1,\ldots,U_k)(V)=Z(S_1 \sqcup U_1,\ldots,S_k
\sqcup U_k)(V)$.
\end{thm}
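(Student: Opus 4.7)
The plan is a well-founded induction on $\delta_X$ from Section~\ref{sssec:orderclosedproducttype}, applied uniformly to pairs $(Z,X)$ so that shifted ambient varieties fall under the inductive hypothesis.

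\textbf{Base case $\delta_X=0$.} Then $Y':=X(\emptyset,\ldots,\emptyset)\subsetneq Y$ and the product-type variety
\[
Z_0 := \bigl[Y';\,(B_1\times_Y Y')\times Q_1,\,\ldots,\,(B_k\times_Y Y')\times Q_k\bigr]
\]
satisfies $Z_0\prec Z$ by clause~(3) of Section~\ref{sssec:ProductTypeOrder} (taking $\pi=\id_{[k]}$). The inclusion $Y'\hookrightarrow Y$ induces a partition morphism $Z_0\to Z$ with shift $(\emptyset,\ldots,\emptyset)$; since each $X(T)(V)$ factors through $Y'=X(\emptyset,\ldots,\emptyset)$ by functoriality, a single quadruple covers $X$.

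\textbf{Inductive step $\delta_X\geq 1$.} Pick $S\in\FI^k$ with $\sum_i|S_i|=\delta_X$ and $X(S)\subsetneq Z(S)$, a test object $V^*\in\Vec$, and a nonzero $f\in\II(X(S)(V^*))$. By Proposition~\ref{prop:shift}, $\Sh_SZ$ is again product type with base $Y'_S$, $\Sh_SX$ is a closed subvariety, and $f$ becomes a nonzero element of $K[Y'_S(V^*)]$ vanishing on $(\Sh_SX)(\emptyset,\ldots,\emptyset)=X(S)$. I would then invoke Proposition~\ref{prop:Smaller} applied to the pair $(\Sh_SZ,f)$ to produce a finite family of partition morphisms $(\rho_j,\psi_j):W_j\to\Sh_SZ$ whose sources $W_j$ are product-type varieties strictly smaller than $Z$ and whose images jointly contain $\Sh_SX$. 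Packaged as quadruples with shift $S$, these cover every $p\in X(T)(V)$ for which $|T_i|\geq|S_i|$ holds for all $i$: one can then find bijections $\sigma_i:T_i\to S_i\sqcup U_i$ and view $p$ as a point of $(\Sh_SX)(U_1,\ldots,U_k)(V)$.

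To cover the remaining points---those with $|T_j|<|S_j|$ for at least one $j$---I would enumerate the finitely many profiles $(J,\bar s)$ with $\emptyset\neq J\subseteq[k]$ and $\bar s=(s_j)_{j\in J}\in\prod_{j\in J}\{0,\ldots,|S_j|-1\}$, and for each choose a representative $S^{(J,\bar s)}\in\FI^k$ of size $s_j$ at coordinate $j\in J$ and $|S_i|$ at coordinate $i\notin J$. Then $\sum_i|S^{(J,\bar s)}_i|<\delta_X$, so by minimality $X(S^{(J,\bar s)})=Z(S^{(J,\bar s)})$; nonetheless $\Sh_{S^{(J,\bar s)}}X$ is a proper closed subvariety of $\Sh_{S^{(J,\bar s)}}Z$ (witnessed by the shape $S$ at the appropriate higher level) and its $\delta$-invariant in that ambient is bounded above by $\sum_{j\in J}(|S_j|-s_j)<\delta_X$. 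The inductive hypothesis applied to these shifted pairs yields further quadruples with shift $S^{(J,\bar s)}$; Lemma~\ref{lm:BaseChangeLM} translates the resulting order estimates back to $\prec Z$. Pooling these with the main quadruples, together with a direct $l=0$ quadruple $(0,Z(T),T,(\pi,\id))$ for each of the finitely many shapes $T$ with $|T_i|<|S_i|$ for every $i$ (which is $\prec Z$ by Lemma~\ref{lm:Smallerlk} since $0\neq k$), produces the required finite list.

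\textbf{Main obstacle.} The hardest part will be arranging that the partition morphisms from Proposition~\ref{prop:Smaller} have sources that are strictly smaller than the \emph{original} $Z$ (and not merely than the shifted ambient $\Sh_SZ$); this is the central technical content of that proposition and relies on iterating the Embedding-Theorem argument of \cite{draisma,bik-draisma-eggermont-snowden} together with the $\LM$-monotonicity of Lemma~\ref{lm:BaseChangeLM}. A secondary subtlety is tracking the inductively obtained outputs in the varying shifted ambient varieties and verifying they transfer back to quadruples $\prec Z$ in the original setting.
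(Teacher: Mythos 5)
Your overall strategy (induction on $\delta_X$, the all-empty base case, shifting and then invoking Proposition~\ref{prop:Smaller}) is the paper's, but the inductive step has genuine gaps. First, you misuse Proposition~\ref{prop:Smaller}: it is not a statement that takes ``$(\Sh_S Z, f)$'' and returns finitely many partition morphisms whose images contain $\Sh_S X$. Its input is a $\Vec$-variety datum $X\subsetneq B\times Q$ with $B\subseteq Y\times\AA^n$ and $Q$ pure, and its output is a proper closed $Y_0\subsetneq Y$, a map $\alpha:Y'\to Y$, and finitely many $B_l\times Q_l\prec B\times Q$ with morphisms $\beta_l$ satisfying a \emph{simultaneous lifting} property only for points lying over $Y\setminus Y_0$. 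Because you shift over all of $S$, your $f$ lives entirely on the base of $\Sh_S Z$ and there is no distinguished $B\times Q$ factor left to which the proposition applies; the paper instead removes one element $\ast$ from a nonempty $S_k$, so that $X(S)$ is a proper closed subvariety of $\widetilde{B_k}\times Q_k$ over $\widetilde Y=Z(S_1,\ldots,S_{k-1},\widetilde{S_k})$. Moreover, your claim that the resulting images ``jointly contain $\Sh_S X$'' is false as stated: the points of $X$ lying over $Y_0$ are not covered by the proposition and must be handled separately, which the paper does by forming the largest closed subvariety $X'\subseteq Z$ meeting $Z(S_1,\ldots,\widetilde{S_k})$ in $Y_0$, noting $\delta_{X'}<\delta_X$, and invoking the induction hypothesis. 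Finally, the assembly of $\alpha$ and the $\beta_l$ into a single partition morphism $Z'\to\Sh_S Z$ with $Z'\prec Z$ (via the map $\pi:[k+s]\to[k]$ collapsing the new labels onto $k$, the multi-point lifting over a common $y'$, clause (2) of the product-type order, and Lemma~\ref{lm:Smallerlk}) is exactly the step you defer as ``the hardest part''; it is carried out in the theorem's proof, not inside Proposition~\ref{prop:Smaller}.

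Second, your treatment of the points with $|T_j|<|S_j|$ for some $j$ does not go through. The induction is on proper closed subvarieties of the \emph{fixed} ambient $Z$; applying it to the pairs $(\Sh_{S^{(J,\bar s)}}Z,\Sh_{S^{(J,\bar s)}}X)$ produces sources that are $\prec\Sh_{S^{(J,\bar s)}}Z$, and this does not transfer to $\prec Z$: the base of $\Sh_{S'}Z$ is $Z(S')$, which is not a closed $\Vec$-subvariety of $Y$, so clause (3) of the order in Section~\ref{sssec:ProductTypeOrder} fails and Lemma~\ref{lm:BaseChangeLM} alone cannot repair this. There is also an arithmetic slip: for the profile $J=\{i: S_i\neq\emptyset\}$ with all $s_j=0$ your bound $\sum_{j\in J}(|S_j|-s_j)$ equals $\delta_X$, so the claimed strict decrease fails. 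The paper avoids all of this by covering those points directly: for each coordinate $i$ and each cardinality $c<|S_i|$ it absorbs the $i$-th factor $(B_i\times Q_i)^{T_i}_Y$ (with $|T_i|=c$) into the base, obtaining an $(\FIop)^{k-1}\times\Vec$-variety of product type mapping isomorphically onto the relevant slices of $Z$; this is $\prec Z$ by Lemma~\ref{lm:Smallerlk} simply because the number of $\FI$-factors drops, and no induction is needed for these points.
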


\begin{re}\label{re:unwrap}
Recall Definition~\ref{de:Image} of the image of a partition
morphism. Explicitly, the conclusion above means that there
exist finite sets $U_1',\ldots,U_l'$ and, for each $i \in
[k]$, a bijection $\iota_i:U_i \to \bigsqcup_{j \in \pi^{-1}(i)}
U_j'$, and a point $q \in Z'(U_1',\ldots,U_l')(V)$ such that
\[
(Z(\sigma_1,\ldots,\sigma_k)(V) \circ
(\Sh_SZ)(\iota_1,\ldots,\iota_l)(V) \circ
\phi(U_1',\ldots,U_l')(V))(q) = p.
\]
Informally, we will say that all points in $X$ are {\em hit} by
finitely many partition morphisms from varieties $Z'$ in
$\bC$ of product type with $Z' \prec Z$.
\end{re}

\subsection{A key proposition}

The proof of Theorem~\ref{thm:parameterisation} uses a key proposition that we establish first.
The reader may prefer to read only the statement of this proposition and postpone its proof until after reading the proof of Theorem~\ref{thm:parameterisation} in Section~\ref{ssec:ProofParm}.

\begin{prop} \label{prop:Smaller}
Let $Y$ be a $\Vec$-variety; $n\in\ZZ_{\geq0}$; $B$ a closed
$\Vec$-subvariety of $Y \times \AA^n$; $Q$ a pure polynomial functor;
and $X$ a proper closed $\Vec$-subvariety of $B \times Q
\subseteq Y \times \AA^n \times Q$.
Then there exist:
\begin{itemize}
\item a proper closed $\Vec$-subvariety $Y_0$ of $Y$;
\item a $\Vec$-variety $Y'$ together with a morphism $\alpha:Y' \to Y$;
\item $k \in \ZZ_{ \geq 0}$;
\end{itemize}
and for each $l = 0 ,\dots, k$: 
\begin{itemize}
    \item an
integer $n_l \in \ZZ_{\geq 0}$;
\item a 
closed $\Vec$-subvariety $B_l \subseteq Y' \times \AA^{n_l}$;
\item a pure polynomial functor $Q_l$; 
\item and a morphism $\beta_l:B_l \times Q_l \to B \times Q,$
\end{itemize}
such that the following properties hold:
\begin{enumerate}
\item\label{thesisone} For each $l = 0, \dots, k$, $B_l \times Q_l \prec B
\times Q$ in the preorder from Section~\ref{sssec:orderVecembedded}, and
the following diagram commutes:
\[
\xymatrix{
B_l \times Q_l \ar[r]^{\beta_l} \ar[d] & B \times Q \ar[d]\\
Y' \ar[r]_{\alpha} & Y.}
\]
\item\label{thesistwo} Let $m \in \ZZ_{\geq 0}$,  $V \in \Vec$, and points $p_1,\ldots,p_m
\in X(V) \subseteq Y(V) \times \AA^n \times Q(V)$ whose images in $Y(V)$
are all equal to the same point $y \in Y(V) \setminus Y_0(V)$.
Then there exist indices
$l_j \in \{0, \dots, k\}$ for $j \in [m]$ and points $p'_j \in B_{l_j}(V) \times
Q_{l_j}(V)$ whose images in $Y'(V)$ are all equal to the same point
$y'$ and such that $\beta_{l_j}(V)(p'_j)=p_j$ for all $j \in [m]$.
\end{enumerate}
\end{prop}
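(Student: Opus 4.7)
The plan is to extract the data of the proposition from a single nonzero defining equation of $X$, following the polarisation argument of \cite{draisma} and its iterated form in Lemma~\ref{lm:Dra19}, which packages the Embedding Theorem of \cite[Sec.~4]{bik-draisma-eggermont-snowden}.

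Since $X \subsetneq B \times Q$ is proper, I will first choose some $V_0 \in \Vec$ and a nonzero polynomial $f \in K[Y(V_0)][x_1,\ldots,x_n] \otimes \Sym(Q(V_0)^*)$ in the defining ideal of $X(V_0)$. Fixing a monomial order on the $x_i$ together with a chosen basis of $Q(V_0)^*$, I write $f$ in that order and let $g \in K[Y(V_0)]$ be its leading coefficient. Extending $g$ naturally to $\tilde g \in K[Y]$, I set $Y_0 := V(\tilde g) \subsetneq Y$ and $Y' := \Spec K[Y][1/\tilde g]$, with $\alpha : Y' \hookrightarrow Y$ the canonical open immersion. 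Since $\alpha$ is injective on closed points, each $y \in Y(V) \setminus Y_0(V)$ has a unique preimage $y' \in Y'(V)$, so the ``common $y'$'' requirement in conclusion~\eqref{thesistwo} becomes automatic: any finite family of lifts $p'_j$ of points $p_j$ above the same $y$ must share the unique $y'=\alpha^{-1}(y)$.

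The construction of the $B_l \times Q_l$ then splits according to the shape of the leading monomial $x^u q^v$ of the monic polynomial $f/\tilde g$ over $Y'$. If $q^v = 1$, I take $B_0 := (B \times_Y Y') \cap V(f) \subseteq Y' \times \AA^n$, $Q_0 := Q$, $n_0 := n$, and $\beta_0 : B_0 \times Q_0 \to B \times Q$ the inclusion composed with $\alpha$; since $f/\tilde g$ is monic in $x^u$, we get $\LM(B_0) \supsetneq \LM(B)$, hence $B_0 \times Q_0 \prec B \times Q$ in the preorder of Section~\ref{sssec:orderVecembedded}, and $X$ above $Y\setminus Y_0$ lies in the image of $\beta_0$. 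If $q^v \neq 1$, I invoke Lemma~\ref{lm:Dra19}: the top-in-$Q$-degree component of $f/\tilde g$ is polarised and then derived against finite-dimensional auxiliary vectors to produce, over $Y'$, finitely many linear functionals each of which cuts a direction out of $Q_d$ (where $d$ is the $Q$-degree of $q^v$); these yield polynomial functors $Q_l$ with $Q_l \prec Q$ together with morphisms $\beta_l : B_l \times Q_l \to B \times Q$ over $Y'$ that jointly cover $X$ above $Y \setminus Y_0$. Crucially, the point-specific auxiliary vectors are absorbed as extra coordinates in the $\AA^{n_l}$-factor of $B_l \subseteq Y' \times \AA^{n_l}$, so $Y'$ itself stays an open subvariety of $Y$.

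With this data in hand, property~\eqref{thesisone} holds by construction. For property~\eqref{thesistwo}, given $p_1,\ldots,p_m \in X(V)$ above a common $y \in Y(V)\setminus Y_0(V)$, each $p_j$ lies in the image of some $\beta_{l_j}$ with lift $p'_j \in B_{l_j}(V) \times Q_{l_j}(V)$, and all the $p'_j$ automatically project to the unique $y' := \alpha^{-1}(y) \in Y'(V)$. The hard part will be the case $q^v \neq 1$, where Lemma~\ref{lm:Dra19} must produce actual morphisms $\beta_l$ --- not just embeddings of $X$ into a smaller ambient --- whose images jointly cover $X$ above $Y \setminus Y_0$, and whose auxiliary parameters fit cleanly into the $\AA^{n_l}$-factors over $Y'$. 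This polarisation/derivation step is the technical core of the proof and is the point at which the iteration of the Embedding Theorem of \cite{bik-draisma-eggermont-snowden,draisma} does the real work.
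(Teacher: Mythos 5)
Your overall strategy (extract one equation, peel off its leading coefficient, and use Lemma~\ref{lm:Dra19} to express the top $Q$-coordinates in terms of the rest) is the right starting point, but there is a genuine gap at the step where you declare $Y' := \Spec K[Y][1/\tilde g]$ with $\alpha$ an open immersion and conclude that the ``common $y'$'' in conclusion~(\ref{thesistwo}) is automatic. First, the leading coefficient $g$ lives in $K[Y(V_0)]$ and is not a $\GL(V_0)$-invariant, so there is no natural element $\tilde g$ of a ``coordinate ring of $Y$'' to invert: the localisation you propose is not a $\Vec$-variety. The paper circumvents exactly this by first shifting over a fixed space $U$ and setting $Y' := (\Sh_U Y)[1/c]$; after shifting, $c \in K[Y(U)]$ is fixed by the $\GL$-action on the $V$-argument, so the localisation makes sense functorially. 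Second, and more importantly, your remark that the auxiliary vectors can be ``absorbed as extra coordinates in the $\AA^{n_l}$-factor, so $Y'$ itself stays an open subvariety of $Y$'' does not work: the parameterising morphisms $\beta_l$ for $l\geq 1$ must be defined over the \emph{shifted} base, since solving $h_l = x_l h_{l-1} + r_l$ for the $R_{s_l}$-coordinates requires evaluation directions inside $U$ applied to $Y$, $B$ \emph{and} $Q$ (in the paper, $B_l$ contains the factor $(\Sh_U B)[1/c]\times R_{\leq s_l}(U)\times\AA^1$, and $Q_l$ is a quotient of $\Sh_U R_{\leq s_l}$); these shift directions cannot be packaged as affine coordinates over an unshifted open piece of $Y$.

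Because $\alpha$ is then the shift-projection $(\Sh_U Y)[1/c] \to Y$ rather than an injective open immersion, the common-$y'$ requirement is \emph{not} automatic and is in fact the main content of conclusion~(\ref{thesistwo}): the lift of $p_j$ is $X(\phi\oplus\id_V)(p_j)$ for some $\phi\in\Hom_\Vec(V,U)$ with $c(Y(\phi)y)\neq 0$ and $h_{l_j-1}(X(\phi)p_j)\neq 0$ (where $l_j$ is the minimal index with $h_{l_j}$ vanishing on all shifts of $p_j$), and one must choose a \emph{single} $\phi$ that satisfies these finitely many open conditions for all $j$ simultaneously so that all $p'_j$ map to the same $y' = Y(\phi\oplus\id_V)(y)$; the paper gets this from the irreducibility of $\Hom_\Vec(V,U)$. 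Your proposal skips both the per-point choice of the index $l_j$ and this simultaneity argument, and the shortcut it relies on (injectivity of $\alpha$) is unavailable once $Y'$ is correctly constructed as a shift. Finally, note that one also needs the lifted morphism of \emph{ambient} $\Vec$-varieties (via \cite[Proposition 1.3.22]{bik}) so that $\beta_l$ is defined on all of $B_l\times Q_l$ and not merely on the image of $X$; this point is asserted but not addressed in your sketch.
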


\begin{re}
The condition $\beta_{l_j}(V)(p'_j)=p_j$, together with the
commuting diagram in (\ref{thesisone}), implies $\alpha(y')=y$.
\end{re}

\begin{re}
    The labelling by $l \in \{0,\ldots,k\}$ rather than by $l \in [k]$ is chosen because in the proof of Proposition~\ref{prop:Smaller} the data for $l=0$ are chosen in a slightly different manner than those for $l>0$. However, in the statement of that proposition, all $l$ play equivalent roles.
\end{re}

To apply Proposition~\ref{prop:Smaller} in the proof of
Theorem~\ref{thm:parameterisation} we will do a shift over an
appropriate $k$-tuple of finite sets. After this shift,
we deal with the points of $X$ lying over $Y_0$ by induction, while we
cover those in the complement by a partition morphism constructed with
the morphisms $\alpha$ and $\beta_j$'s, and whose domain is a product-type
variety strictly smaller than $Z$.
Before proving Proposition~\ref{prop:Smaller} in
Section~\ref{ssec:Smaller}, we demonstrate its statement in two special cases.

\begin{ex}
Consider the case where $Y=\Spec K$ and $n=0$; then $B \subseteq
Y \times \AA^n$ is also isomorphic to $\Spec K$. Let $Q$ be an arbitrary polynomial functor. In this case,
$X$ is a proper closed $\Vec$-subvariety of $Q$ and by \cite{bik-draisma-eggermont-snowden}
there exist $k\in\ZZ_{\geq 0}$, (finite-dimensional) varieties $B_0,\ldots,B_k$,
pure polynomial functors $Q_0,\ldots,Q_k \prec Q$ and morphisms $\beta_l:
B_l \times Q_l \to Q$ such that $X$ is the union of the images of the
$\beta_l$.
This is an instance of Proposition~\ref{prop:Smaller} with $Y_0=\emptyset$, $Y'=Y$, and $\alpha=\id_Y$.
Note that then $B_l \times Q_l \prec Q$ since $Q_l \prec Q$, so the specific choice of embedding $B_l \subseteq \AA^{n_l}$ is not relevant.
\end{ex}

\begin{ex}
Consider the case where $Y$ is constant, that is, just given by a (finite-dimensional) variety, and
$Q=0$. Since $X$ is a proper closed subvariety of $B
\subseteq Y \times \AA^n$, there exist a $V\in \Vec$ and a nonzero function
$f \in K[B(V)]$ that vanishes identically on $X(V)$.

Then $f$ is represented by a polynomial in
$K[Y(V)][x_1,\ldots,x_n]$, also denoted by $f$. We may reduce
$f$ modulo $\II(B(V))$
in such a manner that its leading term $c \cdot x^u$ has the property
that $c \in K[Y(V)]$ is nonzero and $x^u \not \in \LM(B)$. Then we take
for $Y_0$ the closed subvariety of $Y$ defined by the vanishing of $c$
and for $Y'$ the complement $Y \setminus Y_0$, with $\alpha:Y' \to Y$
being the inclusion. Furthermore, we take $k=0$,
and $B_0$ to be the intersection of $B$ with
$Y' \times \AA^n$ and with the vanishing locus of $f$ in $Y \times \AA^n$.
Then $\LM(B_0) \supseteq \LM(B)$ and since $c$ is
invertible on $Y'$ and $f$ vanishes on $B_0$, $x^u \in \LM(B_0) \setminus \LM(B)$. To verify (2) of Proposition~\ref{prop:Smaller},
we observe that the $p_j$ all map to the same point in $Y'=Y \setminus
Y_0$, i.e., $p_j$ lies in the set $B_0 \subseteq B$, and we can
just take $p'_j\coloneqq p_j$ for all $j$.
\end{ex}

\subsection{Iterated partial derivatives}
The main technical result for proving Proposition~\ref{prop:Smaller} is Lemma~\ref{lm:Dra19} below. This is essentially an iteration of the argument used to establish the Embedding Theorem in \cite{bik-draisma-eggermont-snowden}, which involves directional derivatives of a function defining a $\Vec$-variety along a direction lying in an irreducible subobject of the 
top-degree part of the ambient polynomial functor.

\begin{lm}
\label{lm:Dra19}
Let $B$ be a $\Vec$-variety and $Q$ a pure polynomial
functor. Decompose 
\[
Q = R_1 \oplus \cdots \oplus R_t,
\]
where the $R_i$ are irreducible objects in the abelian
category of polynomial functors, arranged in weakly
increasing degrees. 
Denote with $R_{\leq s}$ the functor $\bigoplus_{i = 1}^s
R_i$, so that $R_{\leq 0}=0$. 
Let $X$ be a proper closed $\Vec$-subvariety of $B  \times
Q$. Then there exist
\begin{itemize}
    \item a $k \in \ZZ_{\geq 0}$;
    \item $U_0, \dots, U_k \in \Vec$ with partial sums
    $U_{\leq s} \coloneqq \bigoplus_{i = 0}^s U_i$ for $s
    \geq 0$;
    \item indices $0 = s_0 < s_1 \leq \cdots \leq s_{k} \leq t$;
    \item for each $l \in \{0,\ldots,k\}$ a nonzero function 
    $h_l \in K[B(U_{\leq l})\times R_{\leq s_l}(U_{\leq
    l})]$ (so that $h_0 \in K[B(U_0)]$); and 
    \item for each $l \in \{1,\ldots,k\}$, a nonzero
    coordinate $x_l \in R_{s_l}(U_{l})^*$ and a function $r_l$ in $K[B(U_{\leq l})\times (R_{\leq s_l}(U_{\leq l})/R_{s_l}(U_l))]$ such that 
\[
h_l=x_l \cdot h_{l-1} + r_l;
\]
\end{itemize}
and such that, moreover, the function $h_k$ vanishes on $X(U_{\leq k})$.
\end{lm}

\begin{re}
It is here that we use the fact that $K$ has characteristic zero, in at least two different ways: the fact that an arbitrary polynomial functor is a direct sum of irreducible ones, and the fact that, by acting with the Lie algebra of $\GL_n$, we can go from an equation to an equation of weight $(1,\ldots,1)$. We think that our main theorem may be true in positive characteristic as well, but the proof would be more technical and involve techniques from \cite{bik-draisma-snowden-pos-char}, where the theory of $\GL$-varieties in positive characteristic is developed. 
\end{re}

\begin{proof}
    Let $U$ be a finite-dimensional vector space for which there exists
a nonzero $f \in K[B(U) \times Q(U)]$ that vanishes identically on
$X(U)$. Without loss of generality, $U=K^n$ for some $n$. Since the
vanishing ideal of $X(U)$ is a $\GL(U)$-module, we may assume that
$f$ is a weight vector with respect to the standard maximal torus in
$\GL(U)=\GL_n$. Furthermore, by enlarging $U$ if necessary ($n=\deg(f)$
suffices)  we may assume that the weight of $f$ is $(1,\ldots,1)$
(see \cite[Lemma 3.2]{snowden:stable}; strictly speaking, our $\GL(U)$-action is contragredient to the action there, and writing $(-1,\ldots,-1)$ would be more consistent).

Choose $s_k$ as the maximal index in $[t]$ such that $f$ involves
coordinates in $R_{s_k}(U)^*$; if no such index exists, then $k$ is set to
zero, and we may take $U_0=U$ and $h_0=f \in K[B(U_0)]$ and we are done.

After acting with the symmetric group $\Sym([n])$ if necessary, we may
assume that $f$ contains at least one coordinate in $R_{s_k}(U)^*$
of weight $(0,\ldots,0,1,\ldots,1) \eqqcolon (0^{n'},1^{n_k})$, where there
are $n'$ zeroes and $n_k$ ones, with $n'+n_k=n$. We set $U'\coloneqq K^{n'}$
and $U_k\coloneqq K^{n_k}$, so that $U=U' \oplus U_k$. Since $f$ has weight
$(1,\ldots,1)$, we can decompose
\[ f=\left(\sum_{i=1}^N f_i \cdot y_i\right) + r \]
where $N \geq 1$, the $f_i$ have weight $(1^{n'},0^{n_k})$; the $y_i$
are elements in $R_{s_k}(U)^*$ of weight $(0^{n'},1^{n_k})$ and
hence lie in $R_{s_k}(U_k)^*$; and $r$ does not contain elements in
$R_{s_k}(U_k)^*$. This implies that the $f_i$ are elements of $K[B(U')
\times R_{\leq s_k}(U')]$ and $r$ is an element of $K[B(U' \oplus U_k)
\times (R_{\leq s_k}(U' \oplus U_k)/R_{s_k}(U_k))]$.
Furthermore, we may assume that the $f_i$ are linearly independent over
$K$. 

Now act on $f$ with upper triangular elements of $\gl(U_k)$.  With
respect to this action, the $f_i$ are constants, the $y_i$ are replaced by
higher-weight vectors in $R_{s_k}(U_k)^*$, and $r$ remains an element of
$K[B(U' \oplus U_k) \times (R_{\leq s_k}(U' \oplus U_k)/R_{s_k}(U_k))]$.
We can choose a sequence of such upper triangular elements that takes
$y_1$ to a nonzero highest weight vector $v$ in $R_{s_k}(U_k)^*$, and
the same sequence will take each $y_i$ to a scalar multiple of $v$.
Since the $f_i$ are linearly independent, the term $f_1 \cdot v$ in
the result is not cancelled. Hence after this action, $f$ has been
transformed to the desired shape 
\[ f=h \cdot x_k + r \]
with $h \in K[B(U') \times R_{\leq s_k}(U')]$, $x_k$ a nonzero
highest weight vector in $R_{s_k}(U_k)^*$ and $r$ lies in the ring \[ K[B(U' 
\oplus U_k) \times (R_{\leq s_k}(U' \oplus U_k)/R_{s_k}(U_k))].\]
Now we treat the pair $(U',h)$ in exactly the same manner as we treated
the pair $(U,f)$, dragging $r$ along in the process: pick $s_{k-1}$
maximal such that $h$ contains elements from $R_{s_{k-1}}(U')^*$. By
acting with the symmetric group $\Sym([n'])$ on $f$ we may assume
that $h$ contains an element from $R_{s_{k-1}}(U')^*$ of weight
$(0^{n''},1^{n_{k-1}})$, with $n''+n_{k-1}=n'$. Then set $U''=K^{n''}$
and $U_{k-1}=K^{n_{k-1}}$, so that $U'=U'' \oplus U_{k-1}$. 
By acting on $f$ with upper triangular elements of $\gl(U_{k-1})$
we transform it into the shape
\[ f=(\tilde{h} \cdot x_{k-1} + \tilde{r}) \cdot x_k + r \]
where $x_k$ has not changed, $r$ has changed within the space $K[B(U'
\oplus U_k) \times (R_{\leq s_k}(U' \oplus U_k)/R_{s_k}(U_k))]$, $x_{k-1}$
is a highest weight vector in $R_{s_{k-1}}(U_{k-1})^*$, $\tilde{h}$
lies in $K[B(U'') \times R_{\leq s_{k-1}}(U'')]$, and $\tilde{r}$
lies in the ring \[K[B(U'' \oplus U_{k-1}) \times (R_{\leq s_{k-1}}(U'' \oplus U_{k-1})/R_{s_{k-1}}(U_{k-1}))].\]
Continuing in this fashion, we eventually put $f$ in the form
\begin{equation*}
f=x_k(x_{k-1}(\dots(x_2(x_1 h_0 +
r_1)+r_2)\dots)+r_{k-1})+r_k
\end{equation*}
where $h_0 \in K[B(U_0)]$ and $U_0$ is the space left over from $U$ after splitting off all the $U_i$ with $i>0$. Now set
\[ h_l \coloneqq x_l(x_{l-1}(\cdots(x_2(x_1h_0+r_1)+r_2)\ldots)+r_{l-1})+r_l\]
and we are done.
\end{proof}

\subsection{Proof of Proposition~\ref{prop:Smaller}}
\label{ssec:Smaller}

This section contains the proof of the Proposition~\ref{prop:Smaller}, and, for clarity's sake, we spell it out in a concrete example at the end.

\begin{re}
We recall that, for any $\Vec$-variety $Z$ and any $U \in \Vec$, the
shift $\Sh_U Z$ of $Z$ over $U$ is the $\Vec$-variety defined by $(\Sh_U
Z)(V)=Z(U \oplus V)$. There is a {\em natural morphism} $\Sh_U Z \to Z$
of $\Vec$-varieties: for $V \in \Vec$, this morphism $(\Sh_U Z)(V)=Z(U
\oplus V) \to Z(V)$ is just $Z(\pi_V)$, where $\pi_V$ is the projection
$U \oplus V \to V$.
\end{re}

\begin{lm} \label{lm:LM2}
Let $Y$ be a $\Vec$-variety, $n\in\ZZ_{\geq0}$, and $B$
a closed $\Vec$-subvariety of $Y \times \AA^n$. Then for any $U
\in \Vec$, $\Sh_U B$ is a closed $\Vec$-subvariety of $(\Sh_U
Y) \times \AA^n$, and $\LM(B)=\LM(\Sh_U(B))$.
\end{lm}

\begin{proof} This follows from Lemma~\ref{lm:BaseChangeLM}.
\end{proof}

\begin{re}
    Let $X$ be a $\Vec$-variety, $U\in\Vec$ and $f \in K[X(U)]$. We define $(\Sh_U X)[1/f]$ to be the $\Vec$-variety given by $V \mapsto X(U\oplus V)[1/f]$, where we identify $f$ with its image under the natural map $K[X(U)] \to K[X(U \oplus V)]$.
    Note that the action of the group $\GL$ on the coordinate ring of $\Sh_U X$ is the identity on the element $f$.
    In particular, for every $V \in\Vec$, $(\Sh_U X[1/f])(V) \subseteq \Sh_U X(V)$ is the distinguished open set of points not vanishing on the single $f$.
\end{re}

\begin{proof}[Proof of Proposition~\ref{prop:Smaller}]
Since $X$ is a proper closed subvariety of $B \times Q$, we apply the machinery of Lemma~\ref{lm:Dra19}. 
Decompose $Q$ as $R_1 \oplus \cdots \oplus R_t$, where the $R_s$ are
irreducible polynomial functors and $\deg(R_s) \leq \deg(R_{s+1})$ for all $s=1,\ldots,t-1$.
Write $R_{\leq s}\coloneqq R_1 \oplus \cdots \oplus R_s$ and $R_{>s} \coloneqq R_{s+1} \oplus \cdots \oplus R_t$, so that $R_{\leq 0}=\{0\}$ and $R_{>t} = \{0\}$.

By Lemma~\ref{lm:Dra19}, we can construct a sequence of vector spaces $U_0,U_1,\ldots,U_k$
with partial sums $U_{\leq l}\coloneqq \bigoplus_{i=0}^l
U_i$, indices $0=s_0<s_1 \leq
\cdots \leq s_k \leq t$, nonzero coordinates $x_l \in R_{s_l}(U_l)^*$
for $l \in [k]$, nonzero functions $h_l \in K[B(U_{\leq l}) \times R_{\leq s_l}(U_{\leq
l})]$ for $l=0,\ldots,k$ and functions $r_l \in K[B(U_{\leq l}) \times
(R_{\leq s_l}(U_{\leq l})/R_{s_l}(U_l))]$
for $l \in [k]$ such that
\begin{equation}\tag{A}\label{eq:hl}
h_l=x_l \cdot h_{l-1} + r_l
\end{equation}
for each $l=1,\ldots,k$ and such that $h_k$ that vanishes on
$X(U_{\leq k})$.

Now $h_0 \in K[B(U_0)]$ is represented by a polynomial in
$K[Y(U_0)][x_1,\ldots,x_n]$, and after reducing modulo $\II(B(U_0))$,
we may assume that its leading term equals $c \cdot x^u$ where $c \in
K[Y(U_0)]$ is nonzero and $x^u \not \in \LM(B)$.

Now set $U\coloneqq U_{\leq k}=U_0 \oplus \cdots \oplus U_k$. Then we
construct the relevant data as follows.
\begin{enumerate}
    \item\label{Yzero} Define $Y_0$ as the closed $\Vec$-subvariety of $Y$ defined by
    the vanishing of $c$, so that 
    \[ Y_0(V)\coloneqq \{y \in Y(V) \mid \forall \phi \in \Hom_{\Vec}(V,U_0):
    c(Y(\phi)y)=0\}. \]
    
    \item Set $Y'\coloneqq (\Sh_U Y)[1/c]$ with $\alpha:Y' \to Y$ the restriction
    to $Y'$ of the natural morphism $\Sh_U Y \to Y$. 
    \item\label{Bzero} Let $B_0$ be the closed $\Vec$-subvariety of $(\Sh_U B)[1/c]$
    defined by the vanishing of the single equation $h_0$. 
    Note that
    $B_0$ is a closed $\Vec$-subvariety of $Y' \times \AA^{n_0}$ with $n_0\coloneqq n$.
    Define $Q_0\coloneqq Q$ and $\beta_0:
    B_0 \times Q_0 \to B \times Q$ as the identity on $Q$ and equal to
    the restriction to $B_0$ of the natural morphism $\Sh_U B \to B$
    on $B_0$.  Note that $\LM(B_0) \supseteq \LM(B)$ by virtue of
    Lemma~\ref{lm:LM2}, and since $h_0 \in \II(B_0(U_0))$ has
    leading term $c \cdot x^u$ and $c$ is invertible on $Y'$,
    we have $x^u \in \LM(B_0) \setminus \LM(B)$. Thus $B_0 \times
    Q_0 \prec B \times Q$.
    
    \item For $l\in[k]$, set
    \[ Q_l\coloneqq ((\Sh_U R_{\leq s_l})/(R_{\leq s_l}(U) \oplus R_{s_l})) \oplus R_{>s_l}. \]
    Here we recall that, for any pure polynomial functor $R$, the top-degree
    part of $\Sh_U R$ is naturally isomorphic to that of $R$, and its constant
    part is isomorphic to $R(U)$ (see \cite[Lemma 14]{draisma}
    for the first statement; the second is proved in a similar
    fashion). So, since we ordered the irreducible factors
    $R_s$ by ascending degrees, $R_{s_l}$ is naturally a sub-object of the
    top-degree part of $\Sh_U R_{\leq s_l}$; and the constant polynomial
    functor $R_{\leq s_l}(U)$ is the constant part of $\Sh_U R_{\leq s_l}$. Both
    are modded out, and we have $Q_l \prec Q$.
    
    \item For $l\in [k]$, we define $B_l$ as
    \begin{align*}
    B_l&\coloneqq (\Sh_U B)[1/c] \times R_{\leq s_l}(U) \times
    \AA^1 \\
    &\subseteq Y' \times \AA^n \times R_{\leq s_l}(U) \times
    \AA^1 \cong Y' \times \AA^{n_l}.
    \end{align*}
    where $n_l\coloneqq n+\dim(R_{\leq s_l}(U))+1$. Note that the factor $R_{\leq
    s_l}(U)$ is precisely the constant term modded out in the definition
    of $Q_l$; the role of the factor $\AA^1$ will become clear
    below.
    
    \item To construct $\beta_l:B_l \times Q_l \to B \times Q$ we proceed as
    follows. 
    Let $X_l$ be the closed
    $\Vec$-subvariety of $B \times R_{\leq s_l}$ defined by the vanishing of $h_l$.
    Then \eqref{eq:hl} shows that, on the distinguished open
    subvariety $(\Sh_{U_{\leq l-1}} X_l)[1/h_{l-1}]$, the coordinate $x_l$ can be expressed
    as a function on $\Sh_{U_{\leq l-1}} B \times ((\Sh_{U_{\leq l-1}} R_{\leq s_l})
    / R_{s_l})$ evaluated at $U_l$. Since $R_{s_l}$ is
    irreducible, {\em each} coordinate on it can be thus
    expressed; this is a crucial point in the proof of
    \cite[Lemma 25]{draisma}.
    This implies that the projection
    \[ \Sh_{U_{\leq l-1}} B \times \Sh_{U_{\leq l-1}} R_{\leq s_l} \to
    (\Sh_{U_{\leq l-1}} B) \times (\Sh_{U_{\leq l-1}} R_{\leq s_l})/R_{s_l} \]
    restricts to a closed immersion of $(\Sh_{U_{\leq l-1}} X_l)[1/h_{l-1}]$ into the open
    subvariety of the right-hand side where $h_{l-1}$ is nonzero. This
    statement remains true when we replace $U_{\leq l-1}$ everywhere by the
    larger space $U$. After also inverting $c$, we find a closed immersion
    \[ (\Sh_U X_l)[1/h_{l-1}][1/c]
    \to
    (\Sh_U B)[1/c] \times (\Sh_U R_{\leq s_l})/R_{s_l} \times \AA^1,
    \]
    where the map to the last factor is given by $1/h_{l-1}$. By \cite[Proposition 1.3.22]{bik} the inverse morphism from the image of this closed immersion lifts to a morphism of ambient $\Vec$-varieties
    \begin{align*}
    \iota: &B_l \times (\Sh_U R_{\leq s_l})/(R_{\leq s_l}(U) \oplus
    R_{s_l}) \\
    & \cong (\Sh_U B)[1/c] \times (\Sh_U R_{\leq s_l})/R_{s_l} \times \AA^1 \\
    &\to
    \Sh_U (B \times R_{\leq s_l})
    \end{align*}
    that hits all the points in $(\Sh_U X_l)[1/h_{l-1}][1/c]$. Finally, we
    define $\beta_l \coloneqq \beta_l' \times \id_{R_{>s_l}}$ where $\beta_l'$ is
    the composition of $\iota$ and the natural
    morphism $\Sh_U (B \times R_{\leq s_l}) \to B \times R_{\leq s_l}$.
\end{enumerate}
Property (1) in the proposition holds by construction. We
now verify property (2). Thus let $V \in \Vec$, $m \in
\ZZ_{\geq 0}$, and let $p_1,\ldots, p_m \in X(V) \subseteq
Y(V) \times \AA^n \times Q(V)$. Assume that the images
of $p_1,\ldots, p_m$ in $Y(V)$ are all equal to $y$, and that
$y \not \in Y_0(V)$. By definition of $Y_0$, this means that
there exists a $\phi \in \Hom_{\Vec}(V,U)$ such that
$c(Y(\phi)(y)) \neq 0$.

On the other hand, we have $h_k(X(\psi)(p_j))=0$ for all $j$
and all
$\psi:V \to U$, because
$h_k$ vanishes identically on $X$. For $j \in [k]$ define
\[
  l_j \coloneqq \min\{l \mid \forall \psi \in \Hom_{\Vec}(V,U) : h_l(X(\psi)(p_j)) = 0\}.
\]
Put differently, $l_j$ is the smallest index $l$ such that the projection of $p_j$ in $B \times R_{\leq s_l}$ lies in $X_l \subseteq B \times R_{\leq s_l}$. Note that, if $l_j>0$, then there
exists a linear map $\psi:V \to U$ such that $h_{l_j-1}(X(\psi)(p_j))
\neq 0$.

Since $\Hom_\Vec(V,U)$ is irreducible, there exists a linear map $\phi \colon V\to U$ such that first, $c(Y(\phi)(y)) \neq 0$; and second, $h_{l_j-1}(X(\phi)(p_j)) \neq 0$ for all $j$ with $l_j>0$.

We now define the $p_j'$ as follows. First, we decompose $p_j=(p_{j,1},p_{j,2})$
where $p_{j,1} \in B(V) \times R_{\leq s_{l_j}}(V)$ and $p_{j,2} \in
R_{>s_{l_j}}(V)$. Similarly, we decompose the
point $p_j'=(p_{j,1}',p_{j,2}')$ to be constructed.
\begin{enumerate}
\item Set $p_{j,2}'\coloneqq p_{j,2}$ for all $j$.  Recall that we had defined
$s_0\coloneqq 0$, so that this implies that if $l_j=0$, then the component
$p_{j,2}'$ of $p_j'$ in $Q$ equals the component $p_{j,2}$ of $p_j$
in $Q$.

\item If $l_j=0$, then $p_{j,1} \in B(V)$, and $p_{j,1}' \in
B_0(V) \subseteq (\Sh_U B)[1/c](V)$ is defined as  $B(\phi \oplus
\id_V)(p_{j,1})$.  Note that $p_{j,1}'$ does indeed lie in $B_0(V)$;
this follows from the fact $l_j=0$, so that $h_0(B(\psi)(p_{j,1}))=0$
for all $\psi:V \to U_0$, and hence also for all $\psi$ that decompose
as $\psi' \circ (\phi \oplus \id_V)$.

Furthermore, note that $\beta_0(V)(p_j')=p_j$; this follows from the
equality $\pi_V \circ (\phi \oplus \id_V)=\id_V$. Also, the image of
$p_j'$ in $Y'(V)$ equals $Y(\phi \oplus \id_V)(y)\eqqcolon
y'$.

\item If $l\coloneqq l_j>0$, then $p_{j,1} \in B(V) \times R_{\leq s_l}(V)$ with
$s_l \geq 1$, and $p_{j,1}'$ is constructed as follows. First apply $(B
\times R_{\leq s_l})(\phi \oplus \id_V)$ to $p_{j,1}$ and then forget
the component in $R_{s_l}(V)$. The morphism $\beta_l'$ was constructed
in such a manner that $\beta_l'(V)(p_{j,1}')=p_{j,1}$ and
therefore $\beta_l(V)(p_j')=p_j$. Note that also the image of
$p_j'$ in $Y'(V)$ equals $y'$. This concludes the proof.
\qedhere
\end{enumerate}
\end{proof}

% \begin{ex}
% Let $Y$ be the polynomial functor defined by $Y(V) \coloneqq V \oplus V$ and take
% $B \subseteq Y \times \AA^1$ the closed subvariety
% \[ B(V)\coloneqq \{(v,tv,t) \mid v \in V, t\in K\} \]
% defined by the equation $y_1 - s x_1$, where we write $x_i,y_i$ for the
% coordinates on $Y(K^n)=K^n \oplus K^n$ and $s$ for the coordinate on
% $\AA^1$. Note that $\LM(B)=\emptyset$. 
% Consider $Q(V)\coloneqq S^2 V$, the symmetric tensors of $V\otimes V$, 
% and let $X
% \subseteq B(V) \times Q(V)$ be defined by the $2 \times 2$-minors
% expressing that, in a quadruple $(v,w,t,q)$, the pair $w^2,q$ is
% linearly dependent. Take for the original equation $f$ one of these $2
% \times 2$-minors, and compute all the data as in the proof.
% \end{ex}

\begin{ex}\label{explanatory}
Write $Y$ for the polynomial functor $V \to V \oplus V$ and write $K[x_i,
y_i \mid i \in [n]]$ for the coordinate ring of $Y(K^n)$.  Consider the
$\Vec$-subvariety $B$ of $Y \times \AA^1$ defined by $y_1-t \cdot x_1$,
where $t$ is the coordinate of $\AA^1$. Then $\LM(B)=\emptyset$ and
$B(V)$ is the set of triples $(v, \lambda v, \lambda)$ with $v \in V$
and $\lambda \in K$. Set $Q(V)\coloneqq S^2 V$, and choose
coordinates $z_{ij}, i \leq j$ on $Q(K^n)$ by 
writing an arbitrary element of $Q(K^n)$ as 
\[ \sum_{i = 1}^n z_{ii} e_i^2 + \sum_{ 1\leq i <j\leq n} 2 z_{ij} e_i e_j.\]
Note that $Q$ is an irreducible polynomial functor, so, in the notation of Proposition~\ref{prop:Smaller}, we have $R = R_1 = Q$.
Define the $\Vec$-subvariety
\begin{align*}
&X \subset B \times Q \subset Y \times \AA^1 \times Q \;\text{ by }\\
&X(V)\coloneqq \{(v,w,\lambda,q) \mid (v,w,\lambda) \in B(V) \text{ and }
w^2,q \text{ are linearly dependent}\}.
\end{align*}
An equation for $X(K^2)$ is the determinant
\[
f\coloneqq z_{12} y_1^2 - z_{11} y_1y_2 = t^2 ( z_{12}x_1^2 - z_{11} x_1 x_2
)\in K[B(U_0)\times Q(U_0)]
\]
with $U_0\coloneqq K^2$. Define $U_1\coloneqq \langle e_3, e_4 \rangle \cong K^2$,
so that $U_0 \oplus U_1=K^4$.
Acting on $f$ equation with the (upper triangular) elements $E_{1,3}$ and $E_{2,4}$ of the Lie algebra $\gl(U_0 \oplus U_1)$ gives the equation:
\begin{displaymath} 
h_1 \coloneqq z_{34} (x_1^2 t^2) + (2 z_{14} x_1 x_3 - 2
z_{13} x_1 x_4 - z_{11} x_3 x_4) t^2
\end{displaymath}
that, by construction, vanishes on $X(U_0 \oplus U_1)$.  Note that $z_{34}
\in Q(U_1)^*$, $h_0 \coloneqq x_1^2 t^2 \in K[B(U_0)]$ (and we let $c$
be the leading coefficient: $c \coloneqq x_1^2$), and the rest belongs
to $K[B(U_0 \oplus U_1) \times Q(U_0 \oplus U_1)/Q(U_1)]$. 

By acting with permutations $(3,i)$ and $(4,j)$ with $i<j$ on $h_1$ we find that,
where $h_0$ is nonzero, on $X$ we have
\begin{equation} \label{eq:zij} 
z_{ij} = -\frac{1}{h_0}  \cdot  (2 z_{1j} x_1 x_i - 2
z_{1i} x_1 x_j - z_{11} x_i x_j) t^2. \end{equation}
A similar expression can be found for $z_{ii}$, with the
same denominator $h_0$. 

In this case, $Y_0$ from the proposition is the $\Vec$-subvariety of $Y$
defined by $c=x_1^2$. This consists of all pairs $(0,w) \in V \oplus
V$. The preimage in $X$ consists of all quadruples $(0,0,\lambda,q)$
with $q$ arbitrary.

Set $U \coloneqq U_0 \oplus U_1$, $Y' \coloneqq \Sh_U Y[1/c]$, and let
$B_0$ be the vanishing locus of $h_0$ in $\Sh_U B[1/c] \subset  Y'
\times \AA^1$. Note that we have $t^2 \in \LM(B_0)$---indeed, $t$ even
vanishes identically on $B_0$.
With $Q_0 \coloneqq Q$ we find $B_0 \times Q_0 \prec B\times
Q$, and we define the map:
\[
\beta_0 : B_0 \times Q_0 \to B \times Q
\]
as $B(\pi_V)|_{B_0} \times \id_{Q(V)}$ for every $V \in \Vec$. This covers
all the points in $X(V)$ of the form $(v,0,0,q)$ with $v,q$ arbitrary.

Finally, consider the map:
\[
\Sh_U(B\times Q)[1/h_0][1/c] \to \Sh_U(B\times Q)/Q \times \AA^1
\cong (\Sh_U B \times Q(U) \times \AA^1) \times (\Sh_U
Q/(Q(U) \oplus Q)) \eqqcolon B_1 \times Q_1
\]
where the coordinate on $\AA^1$ is given by $1/h_0$. This is a closed
immersion, because where $h_0$ is nonzero, coordinates on $Q(V)$ with
can be recovered from the coordinates on the right-hand side
via \eqref{eq:zij}. We use this to construct the map
\[
\beta_1: B_1 \times Q_1=\Sh_U(B\times Q)/Q \times \AA^1 \to \Sh_U (B\times Q) \to B\times Q.
\]
The first arrow is given by the identity on the coordinates not in $Q(V)$,
while the coordinates on $Q(V)$ are computed via
\eqref{eq:zij}. The
second arrow projects into $B(V)\times Q(V)$. This map hits points in
$X(V)$ of the form $(v,\lambda v,\lambda,\mu (\lambda v)^2)$ with
$v,\lambda$ nonzero. 
\end{ex}

\subsection{Proof of Theorem~\ref{thm:parameterisation}}
\label{ssec:ProofParm}

\begin{proof}[Proof of Theorem ~\ref{thm:parameterisation}]
The $(\FIop)^k \times \Vec$-variety $Z$ is of product type, hence by
Definition~\ref{de:producttype} it can be written as
\[Z=[Y; B_1 \times Q_1, \dots, B_k \times Q_k]\] for some
$\Vec$-subvarieties $B_i$ of $Y\times \AA^{n_i}$ and pure
polynomial functors $Q_i$. Furthermore, $X$ is a proper
closed $(\FIop)^k \times \Vec$-subvariety of $Z$.

We prove, by induction on the quantity $\delta_X$, that all points in $X$
can be hit by partition morphisms from finitely many $(\FIop)^k \times
\Vec$-varieties $Z'$ of product type with $Z' \prec Z$. So in the proof
we may assume that this is true for all proper closed $(\FIop)^k \times
\Vec$-subvarieties $X' \subsetneq Z$ with $\delta_{X'}<\delta_X$.

Let $(S_1,\ldots,S_k) \in \FI^k$ be such that $\sum_i |S_i|=\delta_X$
and $X(S_1,\ldots,S_k) \neq Z(S_1,\ldots,S_k)$. If all $S_i$ are
empty, then set $Y'\coloneqq X(\emptyset,\ldots,\emptyset)$, a proper closed
$\Vec$-subvariety of $Y$, $B_i'\coloneqq Y' \times_Y B_i$, and $Z\coloneqq [Y';B_1' \times
Q_1,\ldots,B_k' \times Q_k]$. The partition morphism $(\id_{[k]},\phi)$
with $\phi(T_1,\ldots,T_k)$ the inclusion $\prod_i (B_i' \times
Q_i)^{T_i} \to \prod_i (B_i \times Q_i)^{T_i}$ has $X$ in its image,
and we have $Z' \prec Z$ because the $Q_i$ remain the same, $\LM(B_i')
\supseteq \LM(B_i)$ by Lemma~\ref{lm:BaseChangeLM}, and $Y'$ is a proper closed $\Vec$-subvariety of
$Y$. In this case, no shift of $Z$ is necessary.

Next assume that not all $S_i$ are empty. First we argue that the points
of $X(T_1,\ldots,T_k)$ where, for some $i$, $|T_i|$ is strictly smaller
than $|S_i|$, are hit by partition morphisms from finitely many
$Z' \prec Z$. We give the argument for $i=k$. Define the $k$-tuple
$S$ to be shifted over as $S\coloneqq (\emptyset, \dots, \emptyset, T_k) \in \FI^k$, and define the $(\FIop)^{k-1}\times \Vec$- variety $Z'$ of product type \[ Z' \coloneqq [(B_k \times Q_k)^{T_k}; B_1' \times Q_1, \dots, B_{k-1}'\times Q_{k-1}] \]
with $B_i' = (B_k \times Q_k)^{T_k} \times_Y B_i$.
Consider the partition morphism $(\pi, \phi): Z' \to \Sh_{S}Z$ where
$\pi: [k-1] \to [k]$ is the inclusion and
$\phi(T_1,\ldots,T_{k-1})$ is the natural isomorphism of
$\Vec$-varieties
\[ Z'(T_1,\ldots,T_{k-1}) \to
(\Sh_SZ)(T_1,\ldots,T_{k-1},\emptyset)=Z(T_1,\ldots,T_{k-1},T_k).
\]
Note that $\pi$ witnesses $Z' \preceq Z$ since the $Q_i$ with $i \leq
k-1$ remain the same and $\LM(B'_i) \supseteq \LM(B_i)$ by
Lemma~\ref{lm:BaseChangeLM}. Furthermore, since $k-1<k$, we have $Z'
\prec Z$ by Lemma~\ref{lm:Smallerlk}. All points in $X$
where the last index set has cardinality $|T_k|$ are hit by
this partition morphism. Since there are only finitely many
values of $|T_k|$ that are strictly smaller than $|S_k|$, we
are done.

So it remains to hit points in $X(T_1,\ldots,T_k)$ where
$|T_i| \geq |S_i|$ for all $i$. In this phase we will apply
Proposition~\ref{prop:Smaller}.

As by assumption not all $S_i$ are empty, after a
permutation of $[k]$ we may assume that $S_k \neq \emptyset$. Let $\ast$ be an element of $S_k$ and define $\widetilde{S_k}\coloneqq S_k \setminus \{\ast\}$.
Consider the $\Vec$-varieties
\begin{align*}Z(S_1, \dots, S_k) &= (B_1 \times Q_1)_Y^{S_1}
\times_Y \cdots \times_Y (B_k \times Q_k)_Y^{\widetilde{S_k}}
\times_Y (B_k \times Q_k)^{\{\ast\}} \text{ and}\\
\widetilde{Y}\coloneqq Z(S_1,\ldots,S_{k-1},\widetilde{S_k})&=
(B_1 \times Q_1)_Y^{S_1}
\times_Y \cdots \times_Y (B_k \times Q_k)_Y^{\widetilde{S_k}}.
\end{align*}
Set $\widetilde{B_k}\coloneqq  \widetilde{Y}\times_Y B_k \subseteq \widetilde{Y}\times
\AA^{n_k}$, and note that $X(S_1, \dots, S_k)$ is a proper
closed $\Vec$-subvariety of $\widetilde{B_k}\times Q_k$.  We
may therefore apply
Proposition~\ref{prop:Smaller} to $\widetilde{Y}, n_k,
\widetilde{B_k}, Q_k$ and
$X(S_1,\dots,S_k)$.

First consider the proper closed $\Vec$-subvariety $Y_0$ of
$\widetilde{Y}$ promised by Proposition~\ref{prop:Smaller}, and let
$X'$ be the largest closed $(\FIop)^k \times \Vec$-subvariety of $Z$
that intersects $Z(S_1,\ldots,S_{k-1},\widetilde{S_k})$ in $Y_0$. Then
$X'(S_1,\ldots,\widetilde{S_k}) \neq Z(S_1,\ldots,\widetilde{S_k})$,
and therefore $\delta_{X'} \leq \delta_{X}-1 < \delta_X$. Hence,
by the induction hypothesis, all points in $X'(T_1, \dots, T_k)$ can
be hit by finitely many partition morphisms from varieties $Z' \prec Z$
of product type.

Next we consider the remaining pieces of data from Proposition~\ref{prop:Smaller}.
First, we have the $\Vec$-variety $Y'$ with a morphism $\alpha: Y' \to \widetilde{Y}$.
Further, we have an integer $s \in \ZZ_{\geq 0}$ and for each $i = 0, \dots, s$ we have integers $n_{k+i}'$; $\Vec$-varieties $B_{k + i}' \subseteq Y' \times \AA^{n_{k+i}'}$; pure
polynomial functors $Q_{k+i}'$;
and morphisms $\beta_{k+i}: B_{k+i}'\times Q_{k+i}' \to \widetilde{B_k}\times Q_k$ satisfying the conditions (\ref{thesisone}) and (\ref{thesistwo}).

Define $B_i'\coloneqq Y' \times_{Y} B_i$ for $i = 1, \dots, k-1$ and the
$(\FIop)^{k+s}\times \Vec$-variety
\[ Z' \coloneqq [Y'; B_1'\times Q_1,
\dots, B_{k-1}'\times Q_{k-1}, B_{k}'\times Q'_{k}, \dots, B_{k+s}'\times
Q'_{k+s}].\]
Now the map $\pi:[k+s] \to [k]$ that is the identity on $[k-1]$ and maps
$[k+s] \setminus [k-1]$ to $\{k\}$ witnesses that $Z' \preceq Z$; here
we use that $B_{k+j}' \times Q'_{k+j} \prec B_k \times Q_k$ for $j \in
\{0,\ldots,s\}$ by the conclusion of Proposition~\ref{prop:Smaller}, and
also Lemma~\ref{lm:BaseChangeLM} to show that $B_i' \times Q_i \preceq
B_i \times Q_i$ for $i \in [k-1]$. In fact, we have $Z'
\prec Z$ by Lemma~\ref{lm:Smallerlk}.

Now the base variety $Y'$ of $Z'$ comes with a morphism
$\alpha$ to the base variety $\widetilde{Y}$ of $\Sh_S Z$;
we have morphisms $\beta_i: B_i' \times Q_i \to
\widetilde{B_i} \times
Q_i$ for $i=1,\ldots,k-1$ (the
natural map $B_i' \to \widetilde{B_i}$ times the identity on $Q_i$) and
the morphisms $\beta_{k+j}: B_{k+j}' \times Q_{k+j}' \to
\widetilde{B_k} \times Q_k$ defined earlier. By
Example~\ref{ex:PMProductType}, these data yield a partition
morphism $(\pi,\phi):Z' \to \Sh_S Z$. We have to show that this
partition morphism hits all points in $X$ that are not in
$X'$.

First we show, for a $V \in \Vec$, that a point $p \in
\Sh_{S}X(\widetilde{T_1}, \dots, \widetilde{T_{k}})(V)$ whose projection
to $\widetilde{Y}(V)$ is not in $Y_0(V)$ lies in the image
of $\phi(\widetilde{T_1},\ldots,\widetilde{T_k})(V)$.
To this end, we write
\[p = ((p_{i,t})_{t \in \widetilde{T_i}})_{i \in [k]}\] with
\[ p_{i,t}\in \Sh_{S}X(\emptyset, \dots, \emptyset, \{t\},
\emptyset, \dots, \emptyset)(V) = \widetilde{Y}(V)
\times_{Y(V)} B_i(V) \times Q_i(V) \subset
\widetilde{Y}(V)\times \AA^{n_i} \times Q_i(V)\]
where the singleton $\{t\}$ is in the $i$-th position.
We write $p_{i,t} = (\widetilde{y}, a_{i,t}, b_{i,t})$ with
$\widetilde{y}\in \widetilde{Y}(V)$, $a_{i,t} \in
\AA^{n_i}$, and $b_{i,t}\in Q_i(V)$.

By definition of a fibre product, the $p_{i,t}$ all have the same
projection $\widetilde{y}$ in $\widetilde{Y}(V)\setminus Y_0(V)$, and
hence we can apply (\ref{thesistwo}) of Proposition~\ref{prop:Smaller}
to the points $p_{k,t}$ with $t \in \widetilde{T_k}$. This yields integers
$l_t \in \{0,\ldots,s\}$ and points $p_{k,t}'
\in B_{k+l_t}'(V) \times Q_{k+l_t}'(V)$ for $t \in \widetilde{T_k}$ whose images in $Y'(V)$ are all equal, say to $y' \in Y'(V)$, and which
satisfy $\beta_{k+l_t}(V)(p_{k,t}')=p_{k,t}$ for all $t$. This implies
that $\alpha(y')=\widetilde{y}$.

Define
\[ T_{k+j}' \coloneqq \{\,t \in \widetilde{T_k}\; \mid
l_t=j\} \]
$j = 0,\dots, s$, and set $T_i'\coloneqq \widetilde{T_i}$ for $i = 1 ,\dots, k-1$.
In $Z'(T_1', \dots, T_{k+s}')$ we define the point $q = ((q_{i,t})_{t \in T_i'})_{i \in [k+s]}$ as follows.
We set $q_{i,t}$ to be $(y', a_{i,t}, b_{i,t})$ for $i = 1, \dots, k-1$ and $t \in T_i'$, and $q_{i,t} = p_{k,t}'$ for $i = k, \dots, k+s$ and $t \in T_i'$.
Then \[ \phi(T_1', \dots, T_{k+s}')(q) = p,\]
as desired.

Now, more generally, consider a point $p$ in $X(T_1, \dots, T_k)(V)
\setminus X'(T_1,\ldots,T_k)(V)$, where the cardinalities satisfy
$|T_i|\geq |S_i|$. Then there exists an $\FI^k$-morphism
$\iota = (\iota_1, \dots, \iota_k): S \to (T_1, \dots, T_k)$
such that $X(\iota)(p) \notin Y_0(V)$.
Define $\widetilde{T_i} \coloneqq T_i \setminus
\Im(\iota_i)$ and extend $\iota$ to an isomorphism $\iota^e:
S \sqcup (\widetilde{T_1}, \dots, \widetilde{T_k}) \to
(T_1,\ldots,T_k)$ by defining $\iota_i$ on $\widetilde{T_i}$
to be the inclusion.
Consider $X(\iota^e)(p) \in X(S \sqcup
(\widetilde{T_1}, \dots, \widetilde{T_k}))(V)$.
This is also a point in $\Sh_{S}X(\widetilde{T_1}, \dots,
\widetilde{T_k})(V)$ whose projection to $\widetilde{Y}(V)$ does
not lie in $Y_0(V)$.
We can therefore find a point $q$ as described above showing
that $X(\iota^e)(p)$ is in the image of $(\pi, \phi): Z' \to \Sh_{S}Z$;
by Definition~\ref{de:Image}, then so is $p$.
\end{proof}

\section{Proof of the Main Theorem} \label{sec:Proof}

The most general version of our Noetherianity result is the
following.

\begin{thm} \label{thm:Master}
Any $(\FIop)^k \times \Vec$-variety of product type is
Noetherian.
\end{thm}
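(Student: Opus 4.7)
The plan is to prove Theorem~\ref{thm:Master} by Noetherian induction along the well-founded pre-order on product-type $(\FIop)^k\times\Vec$-varieties from Lemma~\ref{lm:wellfoundedproduct}. Fix such a variety $Z$ and assume, as induction hypothesis, that every product-type variety $Z'\prec Z$ is Noetherian; the goal is then to conclude that $Z$ itself is Noetherian.

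Suppose for a contradiction that some descending chain $X_1\supseteq X_2\supseteq\ldots$ of closed $(\FIop)^k\times\Vec$-subvarieties of $Z$ fails to stabilise. After passing to a strictly decreasing subsequence, and dropping the initial term if it happens to coincide with $Z$, I may assume $X_1\subsetneq Z$ is proper. Applying the Parameterisation Theorem~\ref{thm:parameterisation} to $X_1\subsetneq Z$ yields finitely many product-type varieties $Z'_1,\ldots,Z'_r$, each satisfying $Z'_j\prec Z$, together with tuples $S^{(j)}\in\FI^k$ and partition morphisms $(\pi_j,\phi_j):Z'_j\to\Sh_{S^{(j)}}Z$ such that every point of $X_1$ is hit by one of them in the sense of Remark~\ref{re:unwrap}.

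For each $j$ and each $i$, observe that $\Sh_{S^{(j)}}X_i$ is a closed $(\FIop)^k\times\Vec$-subvariety of $\Sh_{S^{(j)}}Z$; by Lemma~\ref{lm:Preimage} its preimage $\widetilde{X}_i^{(j)}\coloneqq(\pi_j,\phi_j)^{-1}(\Sh_{S^{(j)}}X_i)$ is a closed subvariety of $Z'_j$. Since $Z'_j$ is Noetherian by the induction hypothesis, the chain $\widetilde{X}_1^{(j)}\supseteq\widetilde{X}_2^{(j)}\supseteq\ldots$ stabilises at some index $N_j$; setting $N\coloneqq\max_j N_j$ deals with all $r$ indices simultaneously. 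Given $p\in X_n(T_1,\ldots,T_k)(V)$ with $n\geq N$, the Parameterisation Theorem furnishes an index $j$, bijections $\sigma_i:T_i\to S^{(j)}_i\sqcup U_i$, and a point $q\in Z'_j$ whose image under $\phi_j$, followed by the appropriate shift relabelling and by $Z(\sigma_1,\ldots,\sigma_k)$, equals $p$. Because the $\sigma_i$ are bijections and $X_n$ is a subfunctor of $Z$, this $q$ must already lie in $\widetilde{X}_n^{(j)}=\widetilde{X}_{n+1}^{(j)}$; unwinding the same chain of maps then places $p$ in $X_{n+1}$. Hence $X_n=X_{n+1}$, contradicting non-stabilisation.

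The substantive content of the argument is entirely packaged inside the Parameterisation Theorem and the well-foundedness of the pre-order; the step above is a variant of Proposition~\ref{prop:coverNoetherian}, adjusted to account for the fact that the cover is only of $X_1$ rather than of all of $Z$ and that the partition morphisms land in $\Sh_{S^{(j)}}Z$ rather than in $Z$ itself. This is a routine verification, so I anticipate no further obstacle. Theorem~\ref{thm:Main2} and the Main Theorem~\ref{thm:main} then follow as corollaries via the reformulation in Remark~\ref{re:reformulation}.
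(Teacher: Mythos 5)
Your proposal is correct and follows essentially the same route as the paper: induction along the well-founded order of Section~\ref{sssec:ProductTypeOrder}, applying the Parameterisation Theorem to the first proper member of the chain, pulling the shifted chain back along the finitely many partition morphisms via Lemma~\ref{lm:Preimage}, and invoking the induction hypothesis on the smaller product-type varieties. Your contradiction-style phrasing and the explicit unwinding of the bijections $\sigma_i,\iota_i$ is just a spelled-out version of the paper's appeal to ``a version of Proposition~\ref{prop:coverNoetherian}''.
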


\begin{proof}
We proceed by induction along the well-founded order on objects of
product type in $\bC$ from Section~\ref{sssec:ProductTypeOrder}.

Let $Z$ be an $(\FIop)^k \times \Vec$-variety of product type and
let $X_1 \supseteq X_2 \supseteq \ldots$ be a descending chain of
closed $(\FIop)^k \times \Vec$-subvarieties. Then either all $X_i$
are equal to $Z$, or there exists an $i_0$ such that $X\coloneqq X_{i_0}$
is a proper closed $(\FIop)^k \times \Vec$-subvariety of
$Z$. In the latter case, by
Theorem~\ref{thm:parameterisation}, there exist a finite number of
objects  $Z_1,\ldots,Z_N$ in $\bC$ of product type, along with $k$-tuples
$S_1,\ldots,S_N \in \FI^k$ and partition morphisms $(\pi_j,\phi_j):Z_j
\to \Sh_{S_j}Z$ such that every point of $X$ is hit by one of these. By
the induction hypothesis, all $Z_j$s are Noetherian. For each
$j$, by Lemma~\ref{lm:Preimage}, the
preimage in $Z_j$ of the chain $(\Sh_{S_j} X_{i})_{i \geq
i_0}$ is
a chain of closed subvarieties, which therefore stabilises. As soon as
these $N$ chains have all stabilised, then so has the chain
$(X_i)_i$---here we have used a version of Proposition~\ref{prop:coverNoetherian}.
\end{proof}

To deduce from this Theorems~\ref{thm:main} and~\ref{thm:Main2}, we
consider $\GL$-varieties $Z_1, \dots, Z_k$ as well as the product $Z\coloneqq Z_1^\NN \times \cdots \times Z_k^\NN$.
Recall Remark~\ref{re:Richer}.

\begin{proof}[Proof of Theorem~\ref{thm:Main2}]
We need to prove that any descending chain $Z \supseteq X_1 \supseteq
\ldots$ of $\Sym^k \times \GL$-stable closed subvarieties of $Z$
stabilises.

To each $Z_i$ is associated a $\Vec$-variety, which by abuse of notation
we also denote $Z_i$; see Remark~\ref{re:GLVar}. Furthermore, $Z_i$
is a closed subvariety of $B_i \times Q_i$ for some finite-dimensional
variety $B_i$ and some pure polynomial functor $Q_i$, and hence $Z$
is a closed subvariety of
\[ (B_1 \times Q_1)^\NN \times \cdots \times (B_k \times Q_k)^\NN. \]
Now each $X_i$ defines a closed $(\FIop)^k \times
\Vec$-subvariety $\widetilde{X_i}$ of
\[ \widetilde{Z}\coloneqq [Y;B_1 \times Q_1,\ldots,B_k \times Q_k],
\]
where $Y$ is a point. By Theorem~\ref{thm:Master}, the $\widetilde{X_i}$
stabilise. As soon as they do, so do the $X_i$.
\end{proof}

\begin{proof}[Proof of the Main Theorem]
Apply Theorem~\ref{thm:Main2} with $k = 1$.
\end{proof}

\printbibliography

\end{document}